\newtheorem{theorem}{Theorem}[section]
\newtheorem{thm}[theorem]{Theorem}
\newtheorem{pro}{Proposition}[section]
\newtheorem{lemma}[pro]{Lemma}
\newtheorem{remark}[pro]{Remark}
\newtheorem{defi}{Definition}[section]
\numberwithin{equation}{section}
\def\mathscr{\mathcal }
\newcommand{\bomega}{{\boldsymbol {\omega}}}
\newcommand{\bi}{{\mathbf{i}}}
\newcommand{\bx}{{\mathbf{x}}}
\newcommand{\by}{{\mathbf{y}}}
\newcommand{\bu}{{\mathbf{u}}}
\newcommand{\bv}{{\mathbf{v}}}
\begin{document}
\title{Lipschitz equivalence of fractals and finite state automaton}
\author{Hui Rao}
\address{Department of Mathematics and Statistics, Hua Zhong Normal University, Wuhan, China.}
\email{hrao@mail.ccnu.edu.cn}
\author{Yunjie Zhu$\dag$}
\address{Department of Mathematics and Statistics, Hua Zhong Normal University, Wuhan, China.}
\email{yjzhu\_ccnu@sina.com}
\date{\today}
\thanks{$\dag$ The correspondence author.}
\thanks{This work is supported by NSFC Nos. 11431007 and  11471075.}

\begin{abstract}
The study of Lipschitz equivalence of fractals is a very active topic in recent years. Most of the studies in literature
concern totally disconnected fractals.
In this paper, using  finite state automata,
 we construct a bi-Lipschitz map between two fractal squares which are not totally disconnected.
 This is the first non-trivial map of this type. We also show that this map is measure-preserving. 
\end{abstract}

 \subjclass[msc2010]{Primary: 28A80 Secondary: {26A16, 68Q45}}
\keywords{self-similar set, Lipschitz equivalence, automaton}

\maketitle
\section{ Introduction}
Two  metric spaces $(X,d_X)$ and $(Y,d_Y)$ are said to be  \emph{Lipschitz equivalent}, and denote by $X\simeq Y$,
if there is a bijection $f:~X\to Y$ and  a constant $C>0$ such that
$$ C^{-1}d_X(x_1,x_2) \leq d_Y \big( f(x_1),f(x_2)\big ) \leq C d_X(x_1,x_2),~~~~\forall~~x_1,x_2\in X.$$
We call $f$ a \emph{bi-Lipschitz mapping} and $C$  a \emph{Lipschitz constant}.
Lipschitz equivalence is an important topic in geometrical measure theory.
The study of Lipschitz equivalence of fractal sets was initialled by by
  Cooper and  Pignartaro \cite{CP}, Falconer and Marsh \cite{FM} and David and Semmes \cite{DS}, \textit{etc};
and it becomes a very active topic recently
(\cite{RRX06, XiXi10,RaoRW12, XiXi12, LuoL13, XiXi13, RuanWX14,FanRZ15, RaoZ15,RaoRW13, LiLM13}).
We note that most of the studies in literature  focus on
self-similar sets  which are totally disconnected, especially a class of fractals called fractal cubes.
For self-similar sets which are not totally disconnected,
 the study is very difficult and  there are few results (\cite{Why58},\cite{RuanW15},\cite{WenZD12},\cite{GuR16}).

An \emph{iterated function system (IFS)} is a family of contractions $\{\varphi_j\}_{j=1}^{m}$  on $\mathbb{R}^{d}$,
and the  \emph{attractor} of  the IFS is the unique nonempty compact set $K$ satisfying
$K=\bigcup_{j=1}^m\varphi_j(K)$, and it is called a \emph{self-similar set}.
See \cite{Fal90} .

For $n\geq 2$ ,~let ${\mathcal{D}}=\{d_1,\cdots,d_m\}  \subseteq \{0,1,\cdots ,n-1\}^d$,
which we call a \emph{digit set}.
Let $\{\varphi_j\}_{j=1}^m$ be the IFS
on $\mathbb{R}^d$ given by $\varphi_j=\frac{1}{n}(x+d_j)$,
then its attractor $K$  satisfies the set equation
\begin{equation}\label{eq-1.1}
 K=\frac{1}{n}(K+{\mathcal{D}}),
\end{equation}
and it is called  a \emph{fractal cube} (\cite{XiXi10}). Especially, when $d=2$, we call $K$ a \emph{fractal square}
(\cite{LuoLR13}).
If two fractal cubes are totally disconnected, there is a simple and elegant criterion for  the Lipschitz equivalence:

\begin{thm}
(  \cite{XiXi10})
Let $E, F$ be two totally disconnected fractal cubes with contraction ratio $1/n$.  Then $E\simeq F$ if
and only if $\dim_H E=\dim_H F$, or alternatively, the digit sets of $E$ and $F$ have the same cardinality.
\end{thm}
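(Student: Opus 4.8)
The plan is to treat the two stated conditions as one and dispatch the easy implication first. Every fractal cube $K$ with $m$ digits and ratio $1/n$ is the attractor of an IFS of $m$ similarities of ratio $1/n$ satisfying the open set condition (the open unit cube works, since the open $n$-adic subcubes indexed by distinct digits are disjoint), so $\dim_H K=\log m/\log n$ and $K$ is Ahlfors regular of that dimension. As $n$ is common to $E$ and $F$, the equality $\dim_H E=\dim_H F$ is equivalent to equality of the exponents, hence to $m_E=m_F$; this settles the ``alternatively''. For the forward implication I would simply invoke the bi-Lipschitz invariance of Hausdorff dimension: a bi-Lipschitz bijection $E\to F$ forces $\dim_H E=\dim_H F$, and therefore $m_E=m_F$.

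The content lies in the converse, and the plan is to route it through a single universal model. Writing $m:=m_E=m_F$, let $\Omega=(\{1,\dots,m\}^{\N},\rho)$ be the symbolic Cantor set with the ultrametric $\rho(x,y)=n^{-|x\wedge y|}$, where $|x\wedge y|$ is the length of the longest common prefix; $\Omega$ depends only on $m$ and $n$. I would prove that every totally disconnected fractal cube with these parameters satisfies $K\simeq\Omega$, and then, applying this to $E$ and to $F$ and composing the two maps, conclude $E\simeq\Omega\simeq F$. The reason the answer is this clean is that, the contraction ratio being fixed at $1/n$ for both sets, the model $\Omega$ is literally the same for $E$ and $F$, so no number-theoretic obstruction of Falconer--Marsh type (which compares different ratios) can intervene.

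The crux, and the step I expect to be the main obstacle, is establishing $K\simeq\Omega$. The naive candidate, the coding map $\pi:\Omega\to K$, is Lipschitz and surjective but is in general neither injective nor endowed with a lower Lipschitz bound: because the level-$1$ pieces of a fractal cube may touch at corners, and touchings are reproduced self-similarly at every scale, there are distinct points of $K$ whose codes differ at the first symbol yet whose distance is arbitrarily small. The remedy is twofold. First I would upgrade the qualitative hypothesis to the quantitative statement that $K$ is \emph{uniformly disconnected}: if it were not, there would be arbitrarily fine chains joining pairs of points at a definite fraction of the local scale, and applying inverse branches of the IFS to rescale these configurations to unit size and passing to a Hausdorff limit would produce a nondegenerate continuum inside $K$, contradicting total disconnectedness. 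Second, using uniform disconnectedness together with Ahlfors regularity, I would build the bi-Lipschitz bijection $K\to\Omega$ not as the coding map but by \emph{regrouping the cylinders scale by scale}, relabelling the metric clusters of $K$ at each level by $\Omega$-addresses so that touching pieces, which $\pi$ keeps far apart in code, are instead ``opened up'' and assigned long common prefixes. Because $K$ and $\Omega$ share the ratio $1/n$ and the branching number $m$, this relabelling preserves scales, and the verification of the two-sided Lipschitz bounds reduces to showing that uniform disconnectedness controls how far touchings propagate across scales. This last estimate, reconciling the zero-distance corner contacts of $K$ with the positive-gap ultrametric of $\Omega$, is where the real work will be.
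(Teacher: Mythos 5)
This theorem is not proved in the paper at all: it is quoted from Xi--Xiong \cite{XiXi10} as background, so there is no in-paper argument to compare yours against. Judged on its own terms, your proposal gets the easy half right (bi-Lipschitz invariance of Hausdorff dimension, and $\dim_H K=\log m/\log n$ under the open set condition, so the two stated conditions coincide), and it correctly identifies the shape of the hard half: reduce both sets to a single symbolic model $\Omega=\{1,\dots,m\}^{\N}$ with metric $n^{-|x\wedge y|}$, and fight the fact that level-one cells of a fractal cube may touch.

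The genuine gap is in the step you yourself flag as ``the real work.'' The suggestion that uniform disconnectedness together with Ahlfors regularity lets you ``regroup the cylinders scale by scale'' into a bi-Lipschitz bijection with $\Omega$ is not a theorem and cannot be one at that level of generality: a dust-like self-similar set with two branches of ratios $1/2$ and $1/4$ is Ahlfors regular, uniformly disconnected and uniformly perfect, yet by Falconer--Marsh it is generally \emph{not} Lipschitz equivalent to an equicontractive Cantor set of the same dimension. (What these hypotheses buy is quasisymmetric, not bi-Lipschitz, equivalence to the standard Cantor set, in the sense of David--Semmes.) So the proof must exploit the exact combinatorics of the cell structure --- $m^k$ cells of diameter comparable to $n^{-k}$ at level $k$ --- and the missing ingredient is the combinatorial matching lemma: after passing to a level $k$ at which the cells split into positively separated clusters (your compactness argument gives this), one must redistribute cells among clusters consistently across all scales so that each cluster is assigned a block of symbolic cylinders of exactly the right cardinality, and verify that the resulting bijection respects distances up to constants. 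This redistribution is the actual content of the Rao--Ruan--Xi / Xi--Xiong argument (carried out via a common graph-directed structure), and without it your outline establishes the forward implication but only a plausible programme for the converse.
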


\begin{figure}[h]
\centering
%\subfigure[]{
%\label{}
%\includegraphics[width=3cm]{F_1_1}
%\includegraphics[width=3cm]{F_1}}

\includegraphics[width=3.2cm]{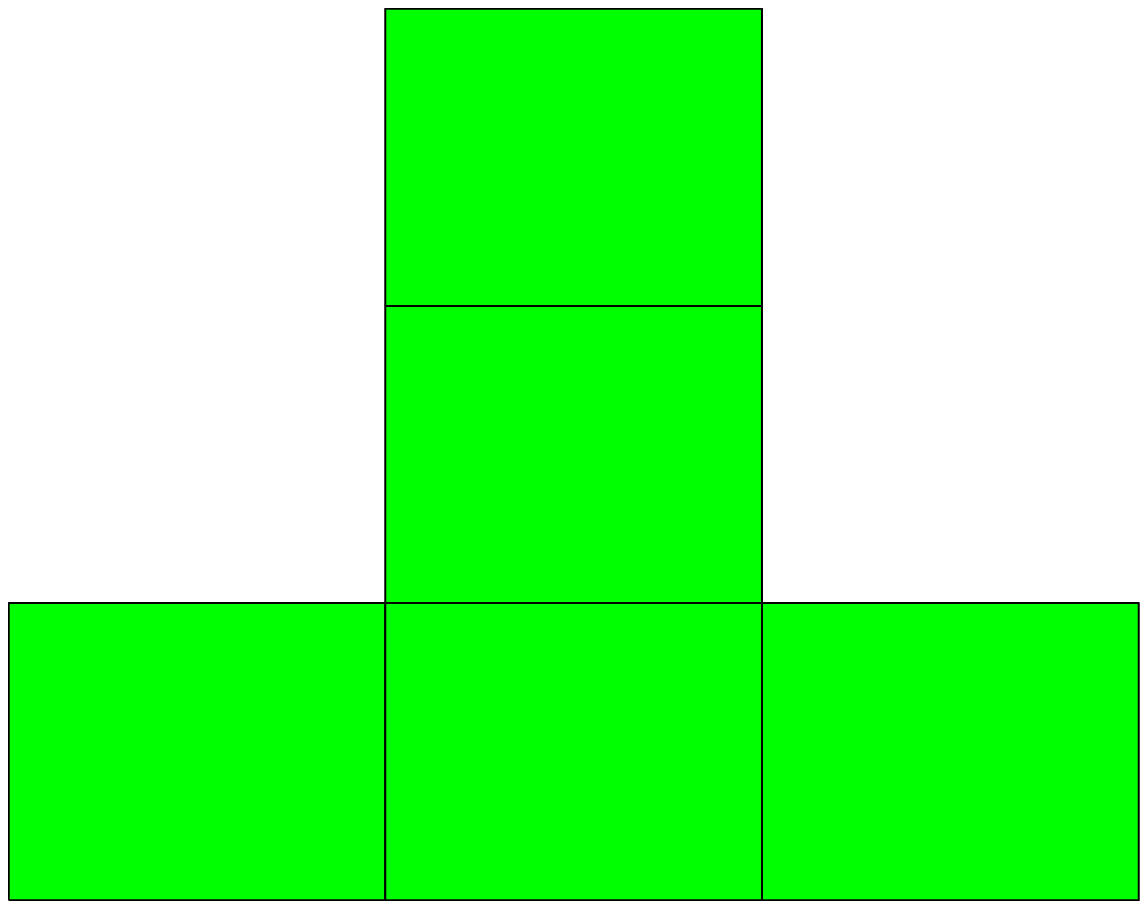} \ \
\includegraphics[width=3.2cm]{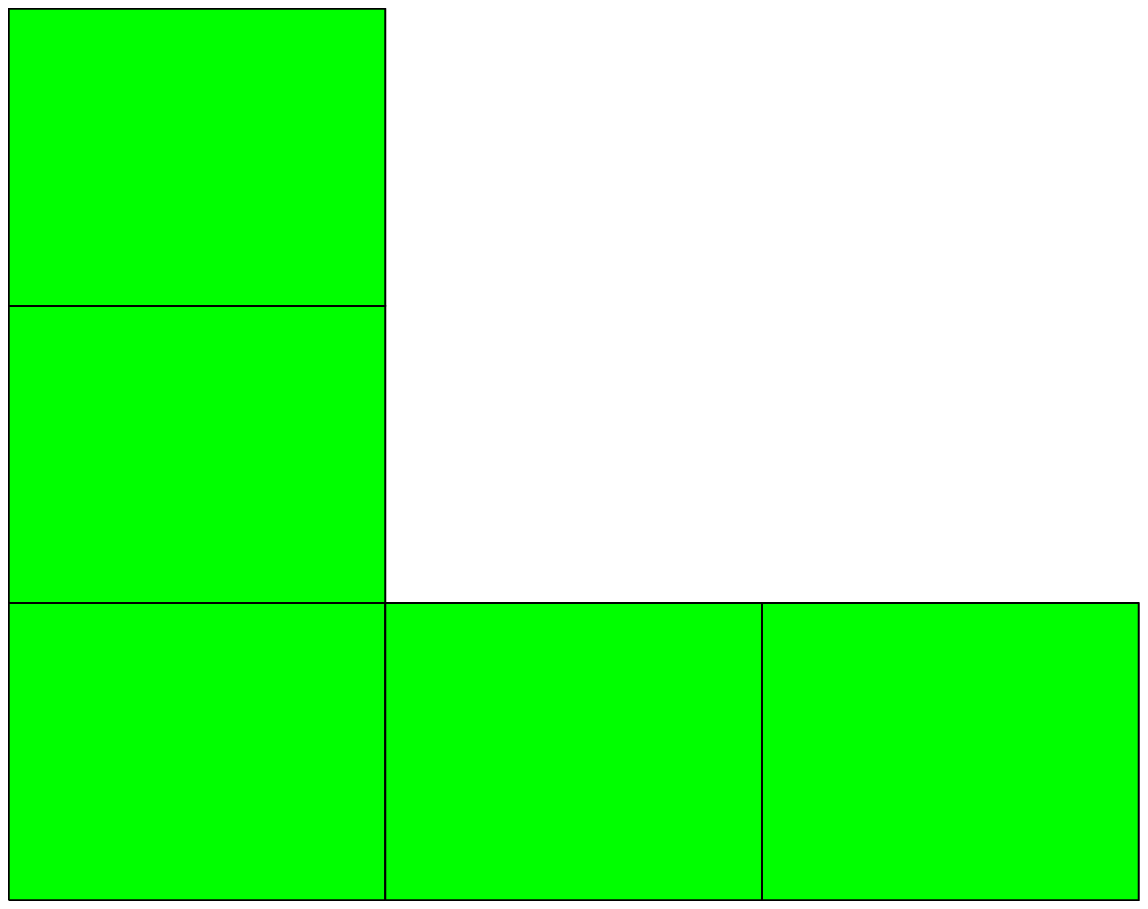} \ \
\includegraphics[width=3.2cm]{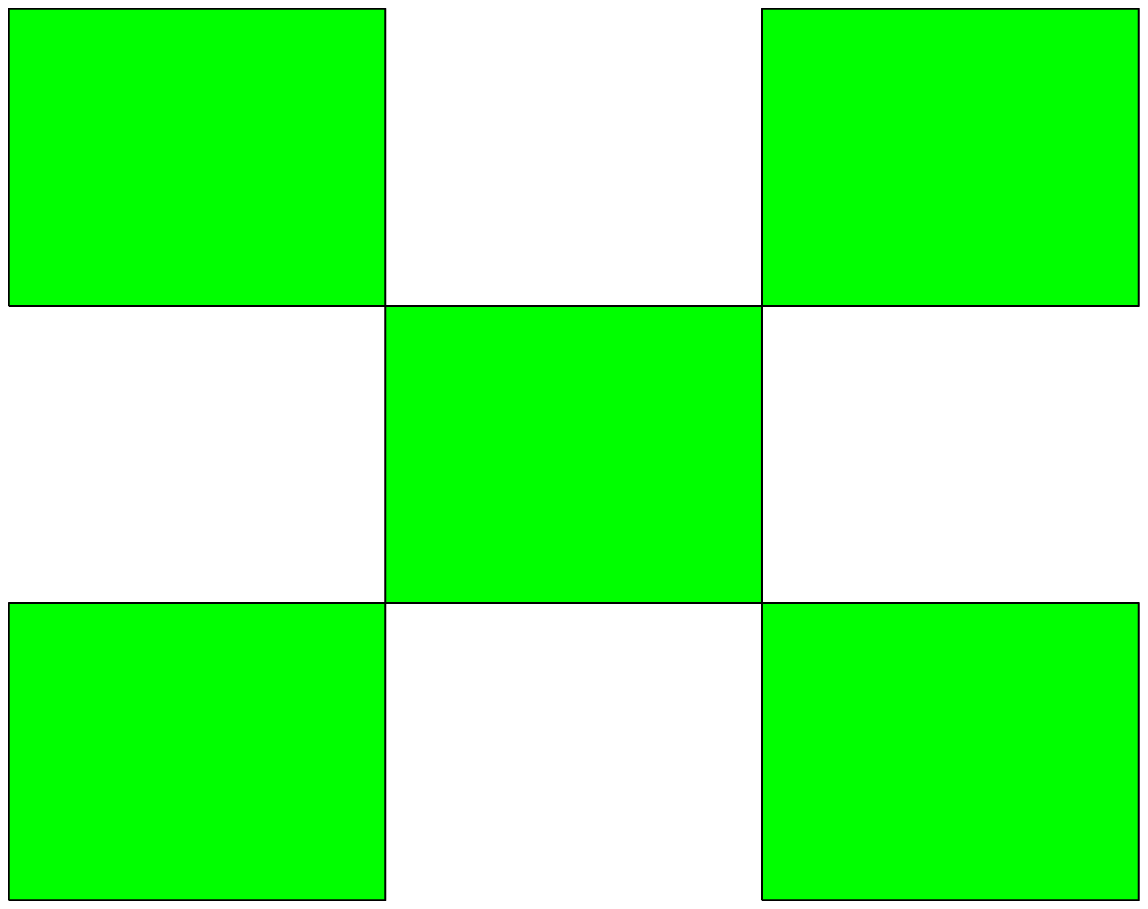}\\
\includegraphics[width=3.2cm]{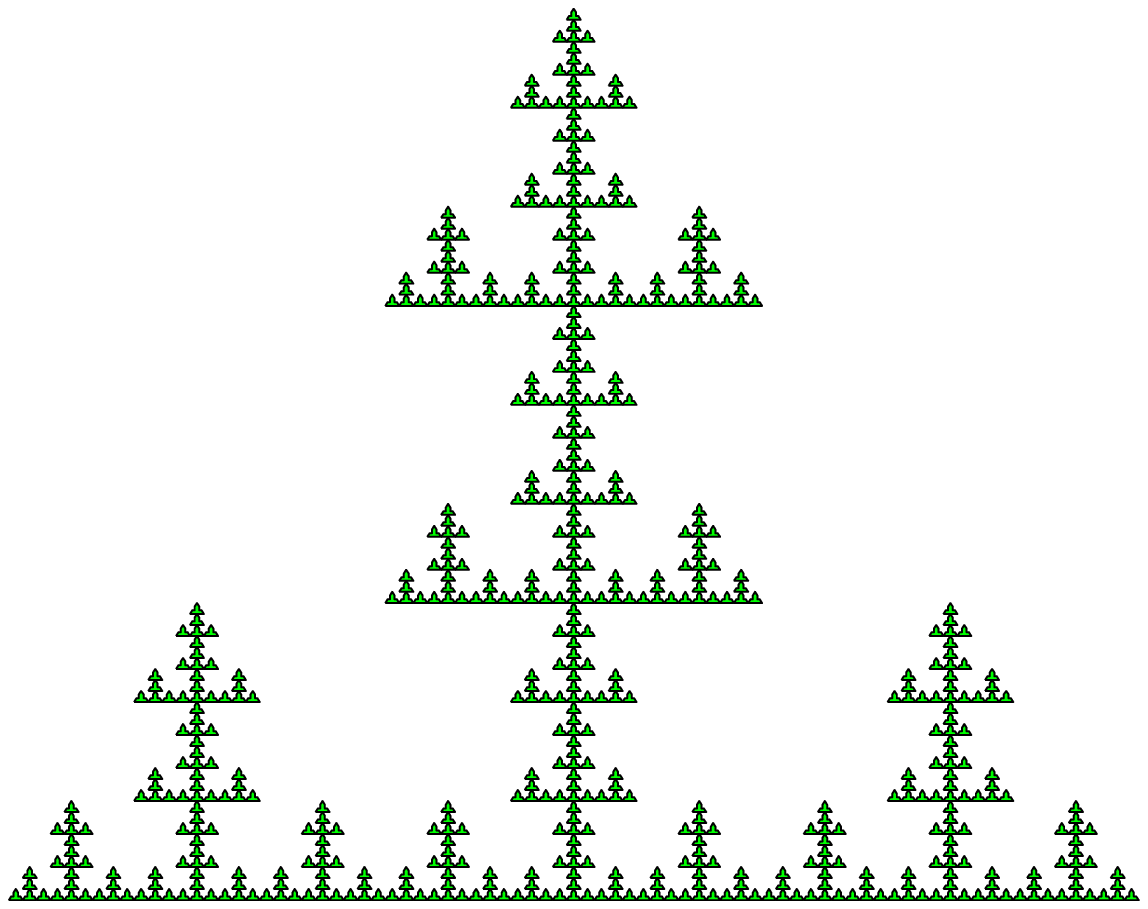} \ \
\includegraphics[width=3.2cm]{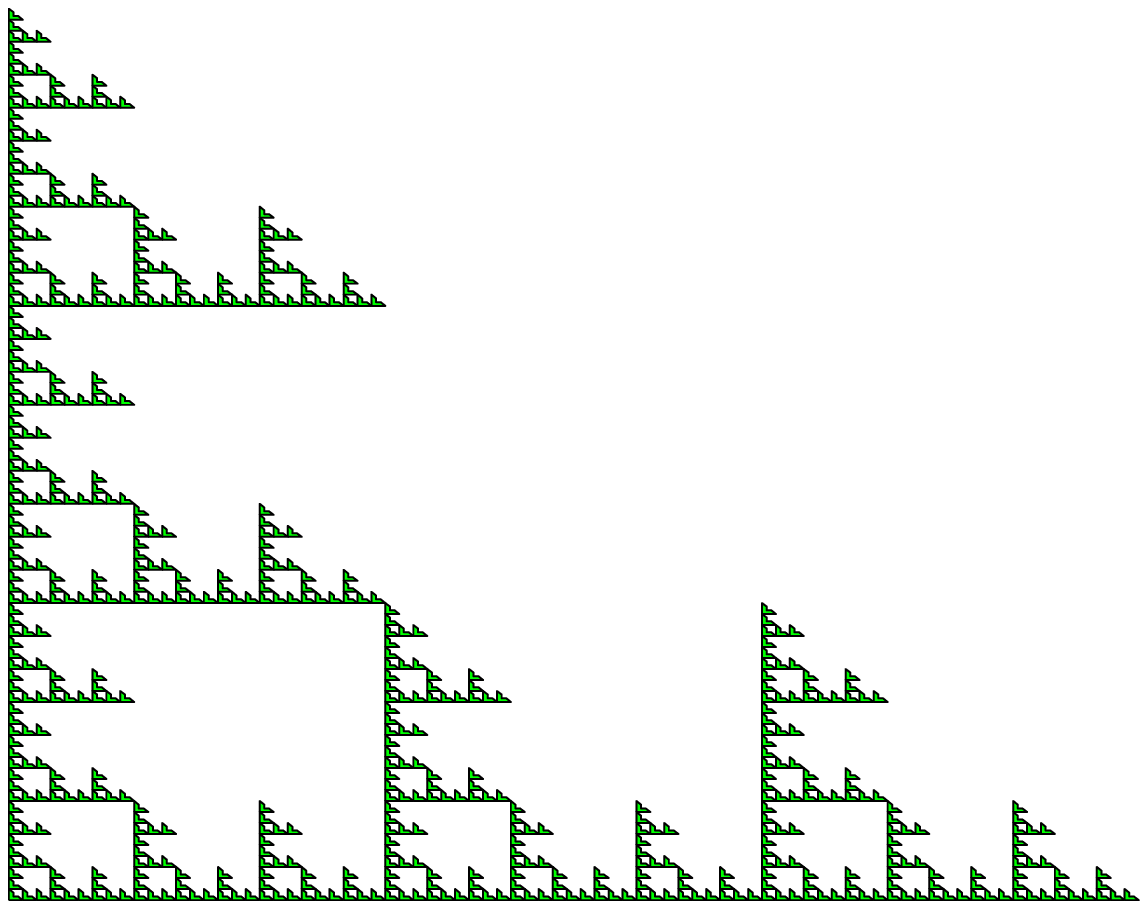} \ \
\includegraphics[width=3.2cm]{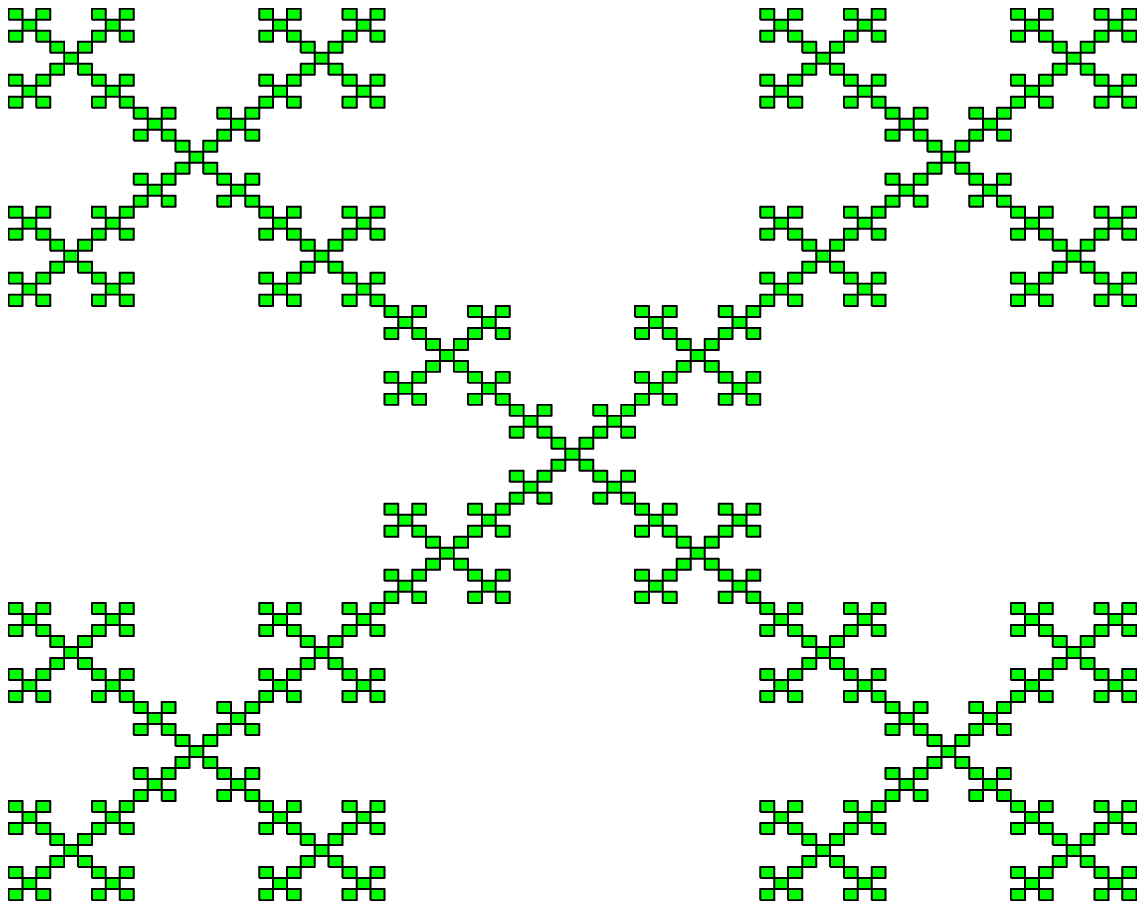}
\label{fig:1}
\caption{Connected fractal squares in ${\mathcal F}_{3,5}$}
\end{figure}

Let $\mathcal{F}_{n,m}$ denote the collection of all fractal cubes satisfying \eqref{eq-1.1} and the cardinality
$\#{\mathcal D}=m$, that is, with contraction ratio $1/n$ and with $m$ branches.
Recently, Luo and Liu \cite{LuoL16} studied the classification of elements in  ${\mathcal F}_{3,5}$ which are not totally disconnected. They show that

$(i)$ for elements in ${\mathcal F}_{3,5}$ which are connected,   there are $4$ classes with respect  to a linear transformation, which are depicted in Figure \ref{fig:1}; moreover,  any two of them are  not homeomorphic.

$(ii)$ For elements in ${\mathcal F}_{3,5}$ which are  not connected but contains non-trivial connected components,
there are $6$ different classes with respect to linear transformations, which are depicted in Figure \ref{fig:2}.
%It is conjectured that all of them are not Lipschitz equivalent to each other.

In 2009, Li-feng Xi and Ying Xiong asked the following question in several conferences.

\medskip
\noindent \textbf{Qustion 1.} \textit{Let $F_1$ and $F_2$ be the fractal squares depicted in Figure \ref{fig:2:a} and Figure \ref{fig:2:b}.
 Are they Lipschitz equivalent?}

\medskip

\medskip
\begin{figure}[h]
\centering
\subfigure[ $F_{1}$]{
\label{fig:2:a}
\includegraphics[width=3cm]{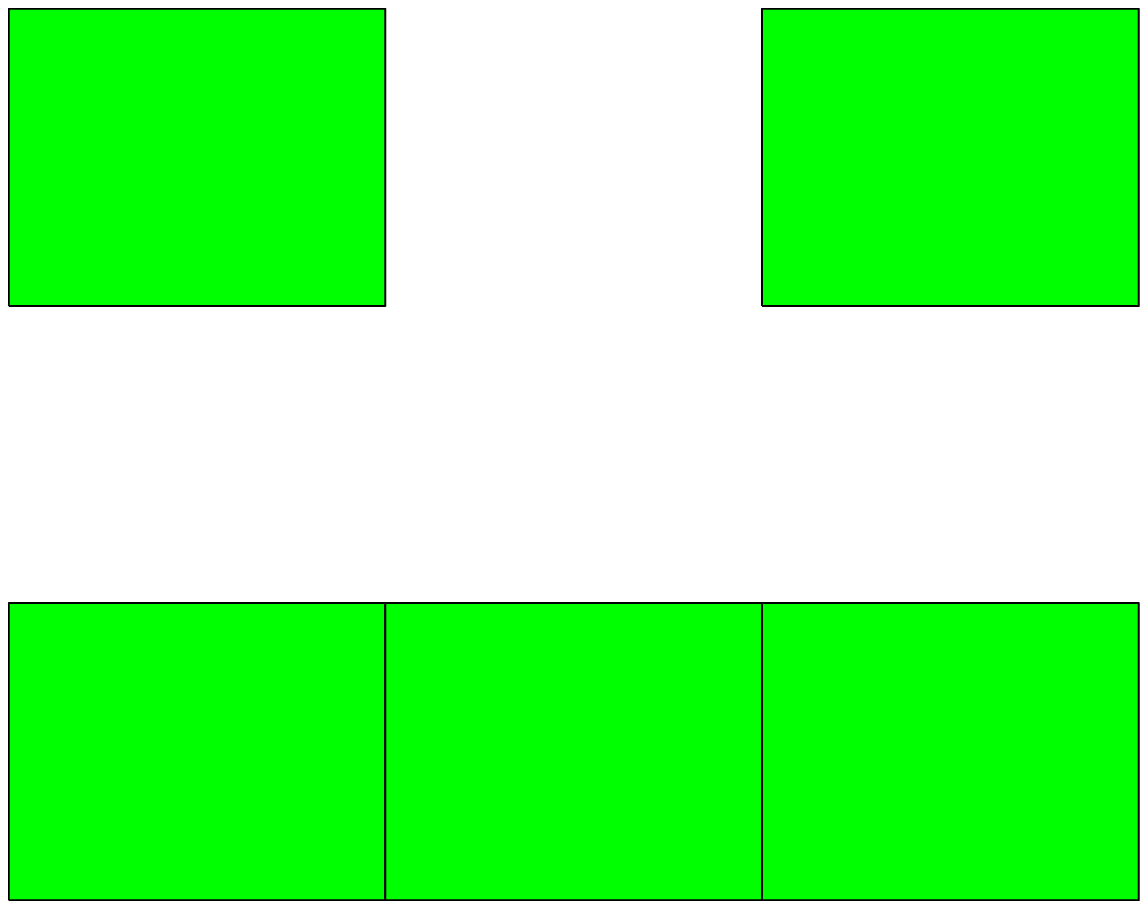}
\includegraphics[width=3cm]{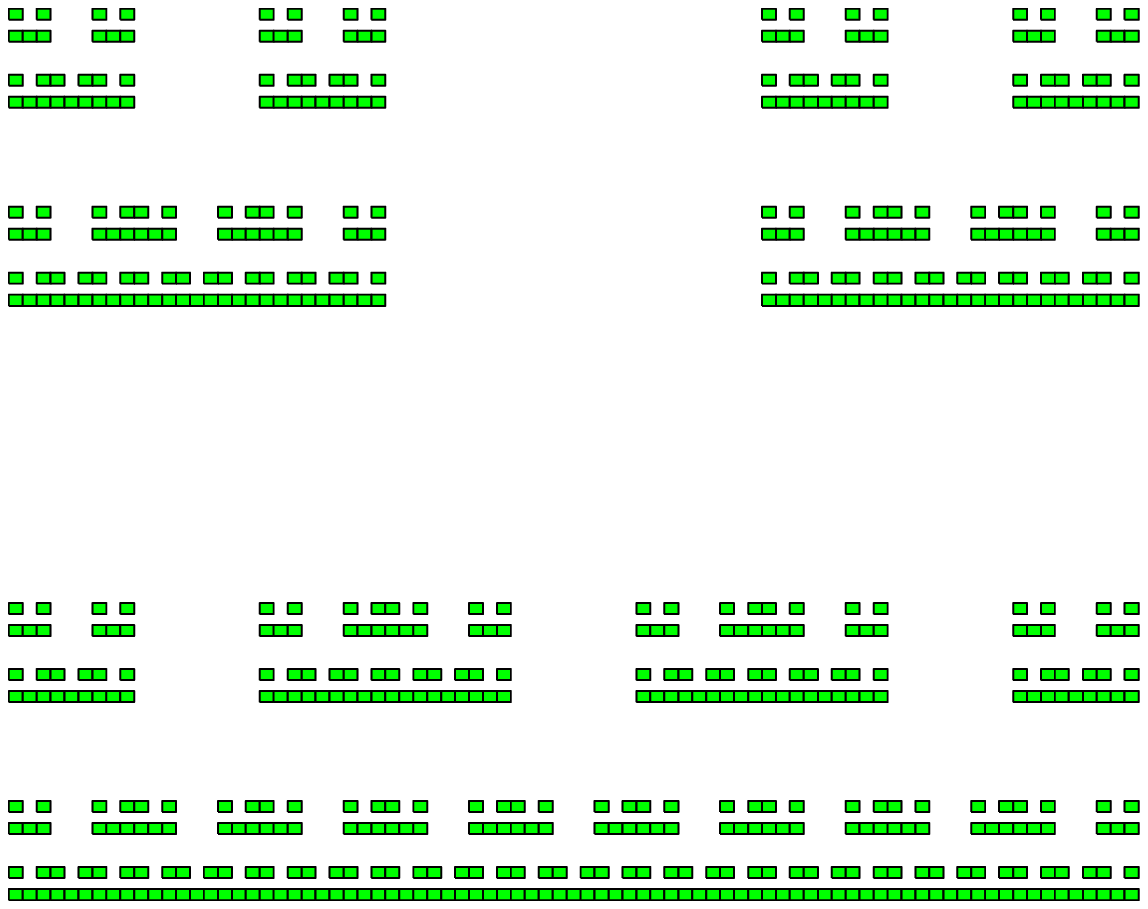}}
\subfigure[ $F_{2}$]{
\label{fig:2:b}
\includegraphics[width=3cm]{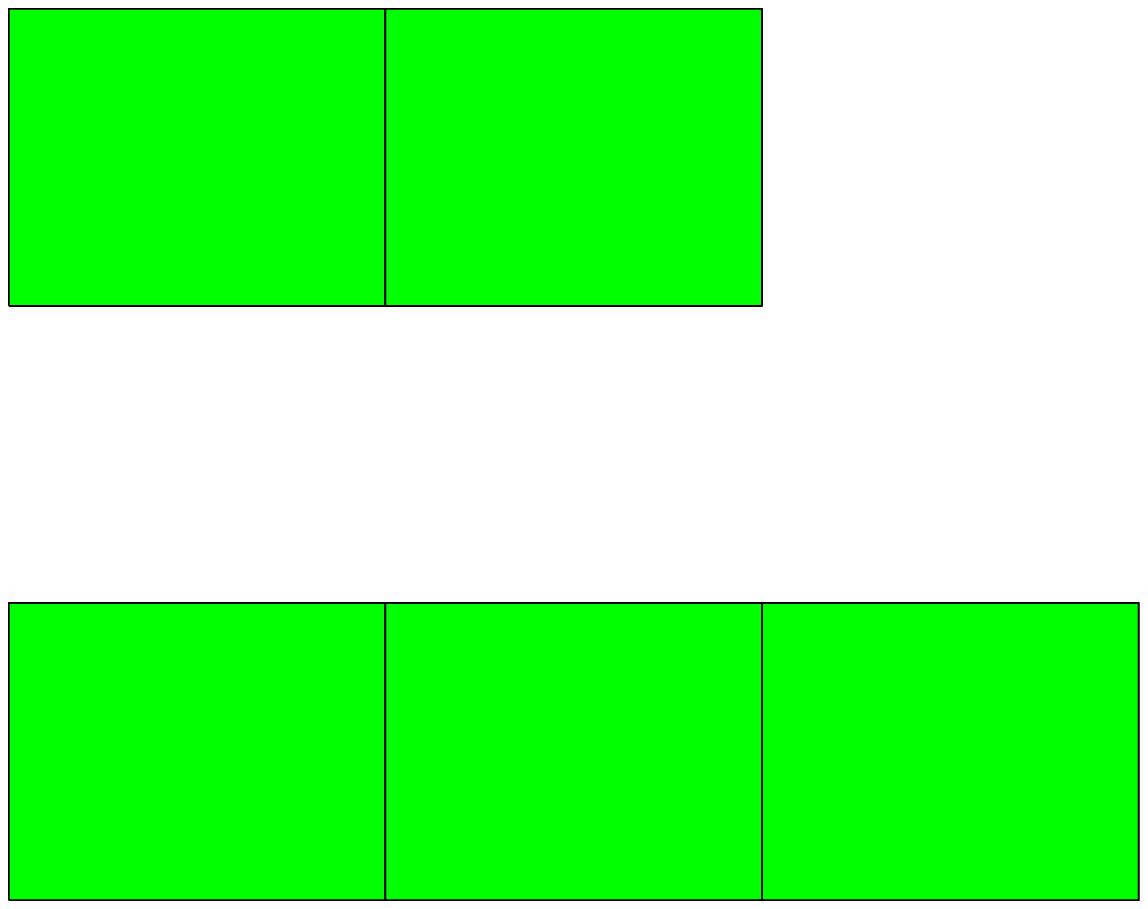}
\includegraphics[width=3cm]{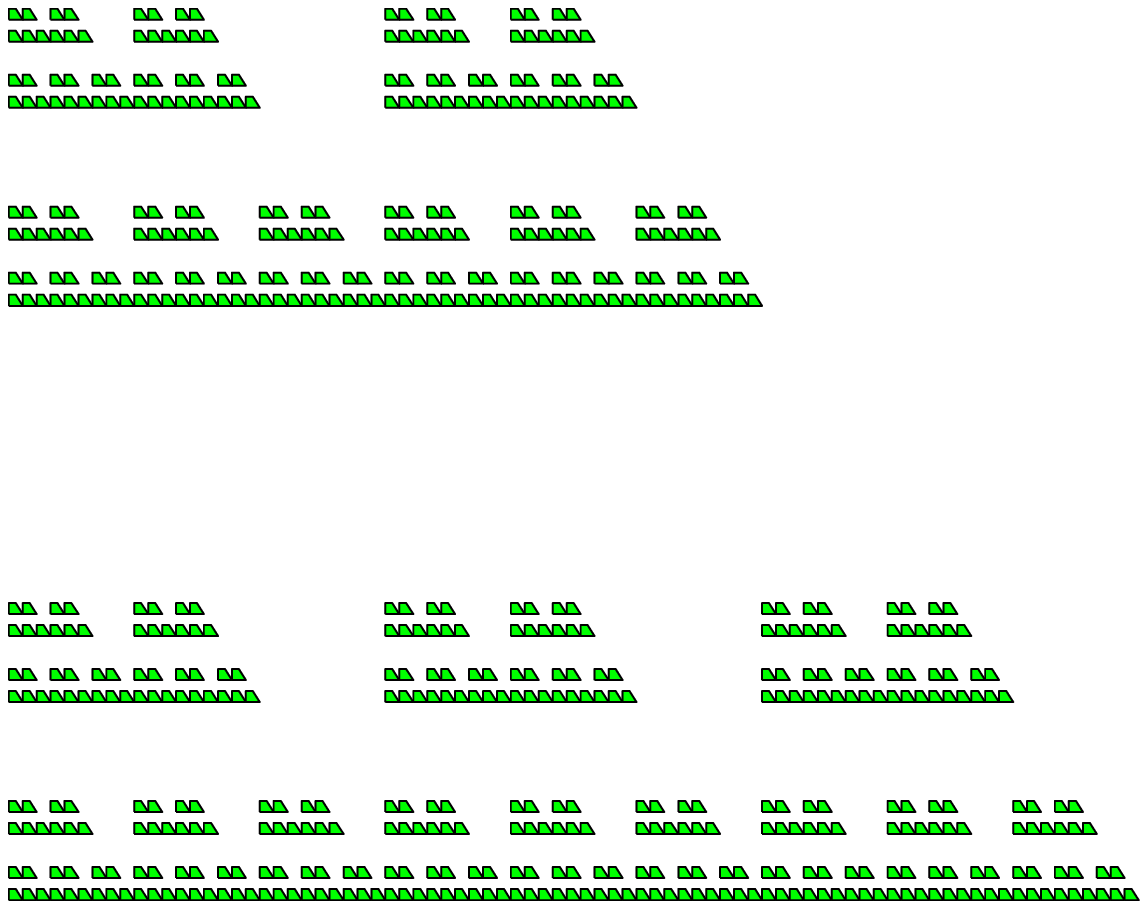}}
\subfigure[ $F_{3}$]{
\label{fig:2:c}
\includegraphics[width=3cm]{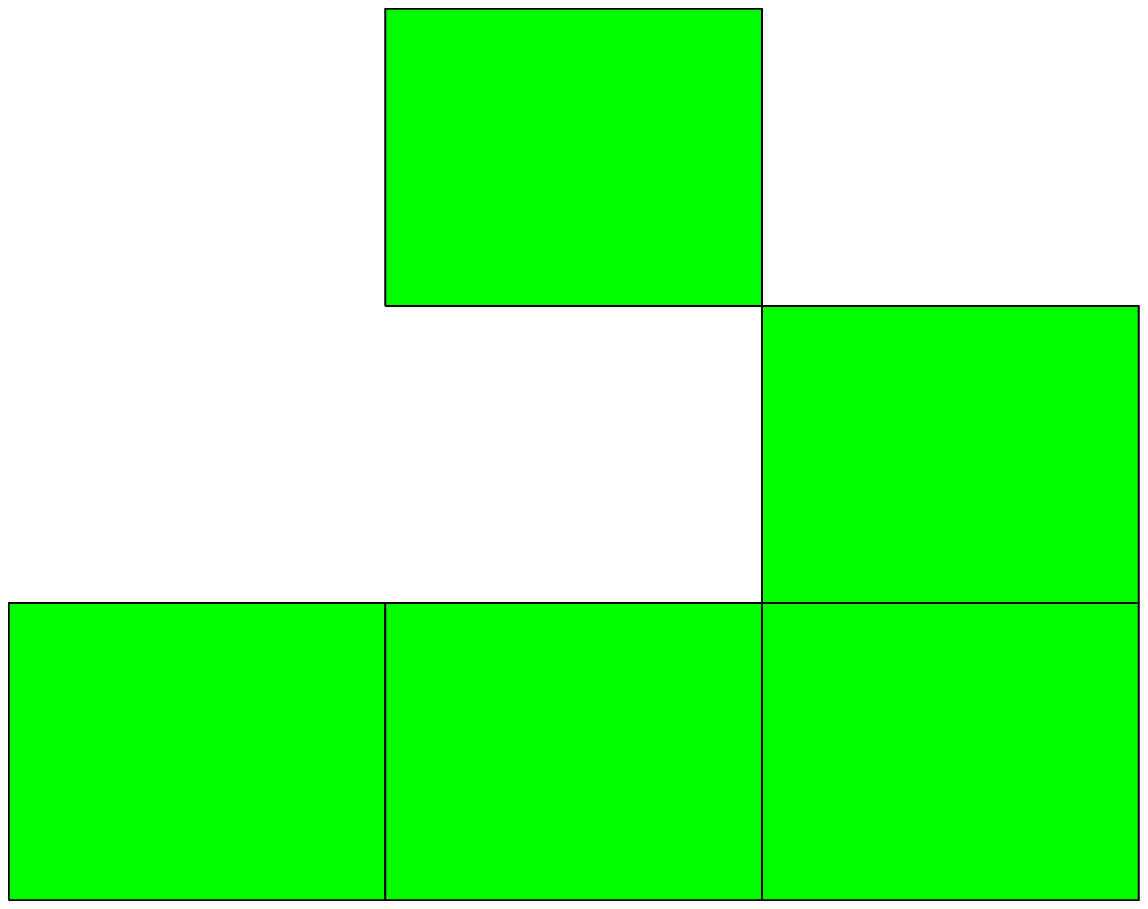}
\includegraphics[width=3cm]{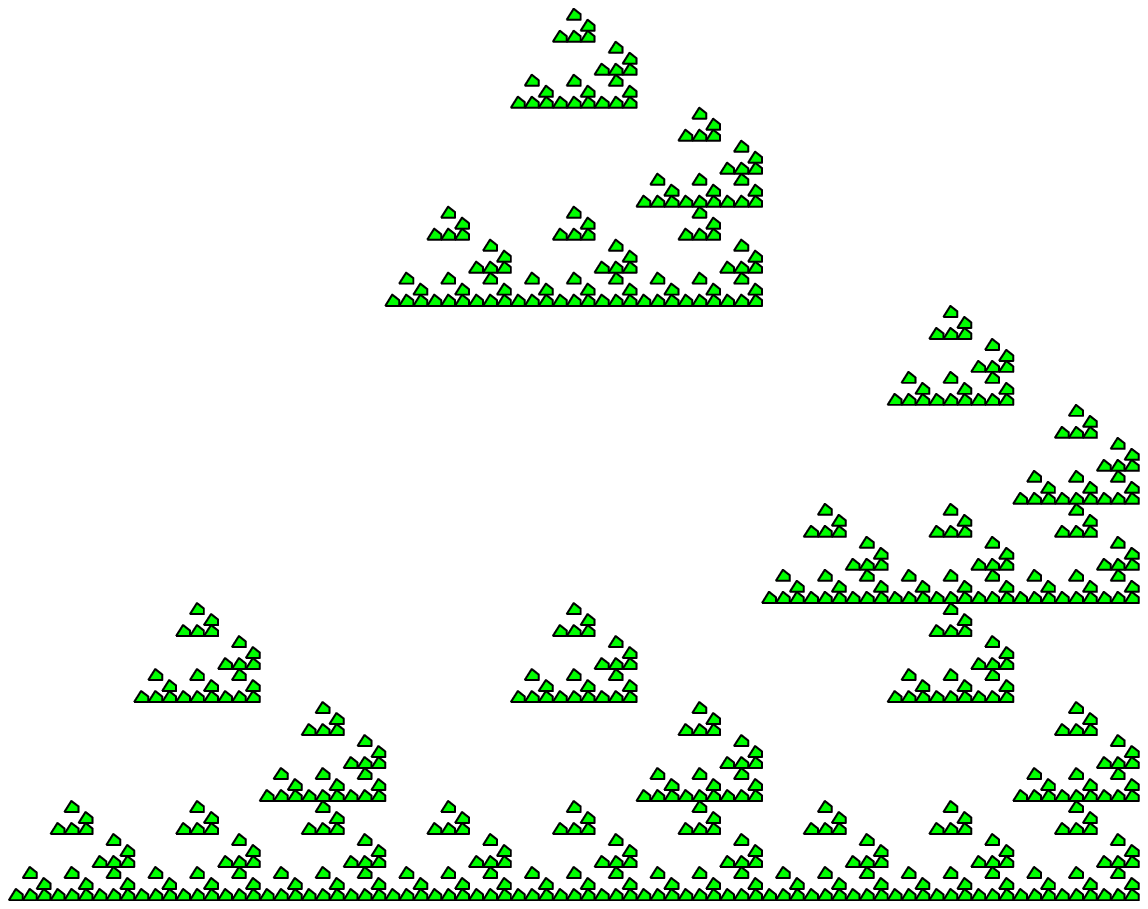}}
\subfigure[ $F_{4}$]{
\label{fig:2:d}
\includegraphics[width=3cm]{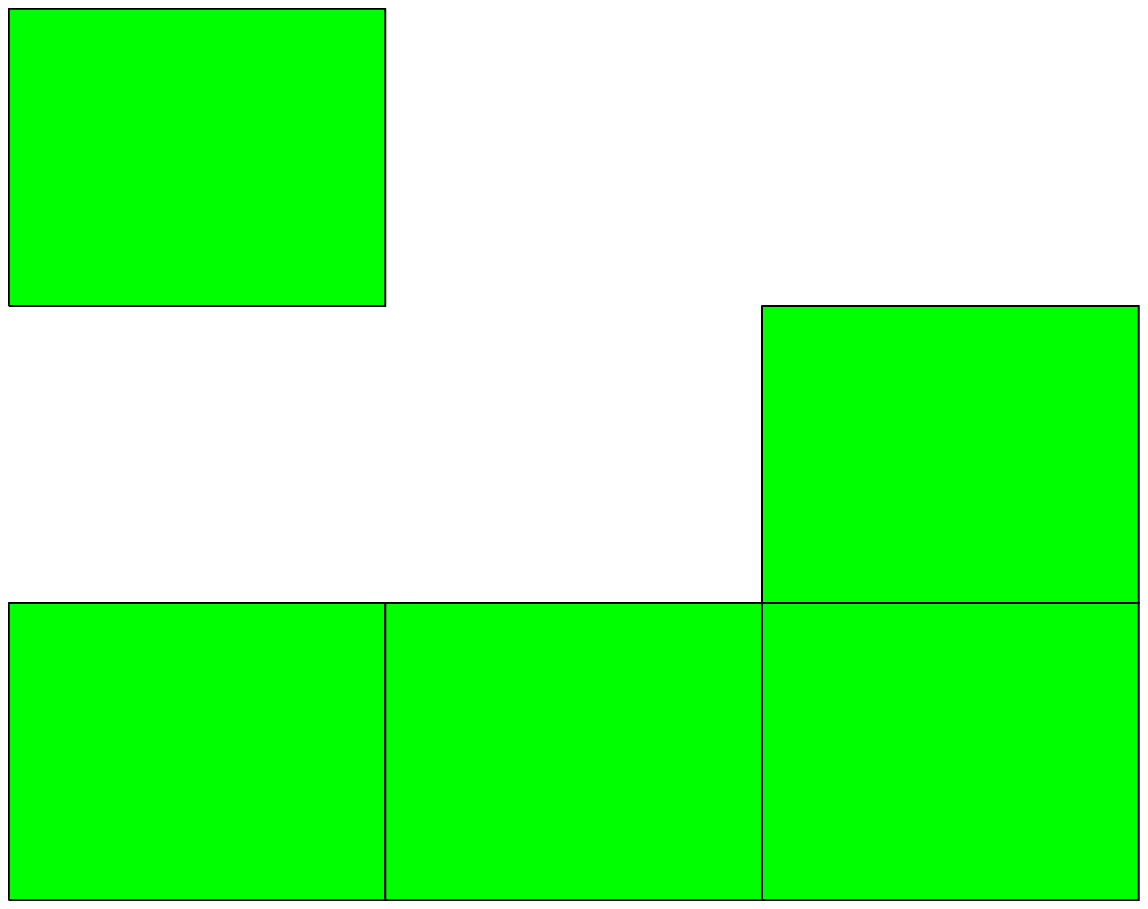}
\includegraphics[width=3cm]{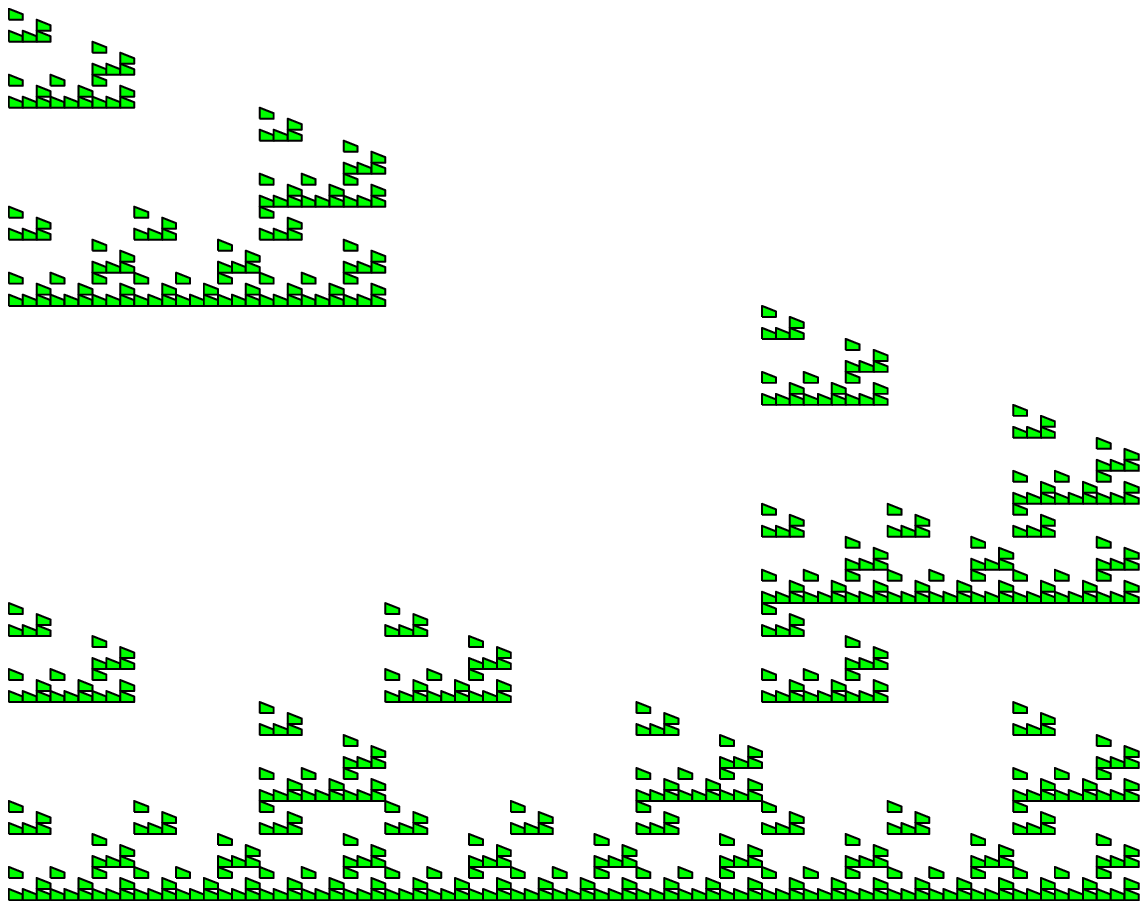}}
\subfigure[ $F_{5}$]{
\label{fig:2:e}
\includegraphics[width=3cm]{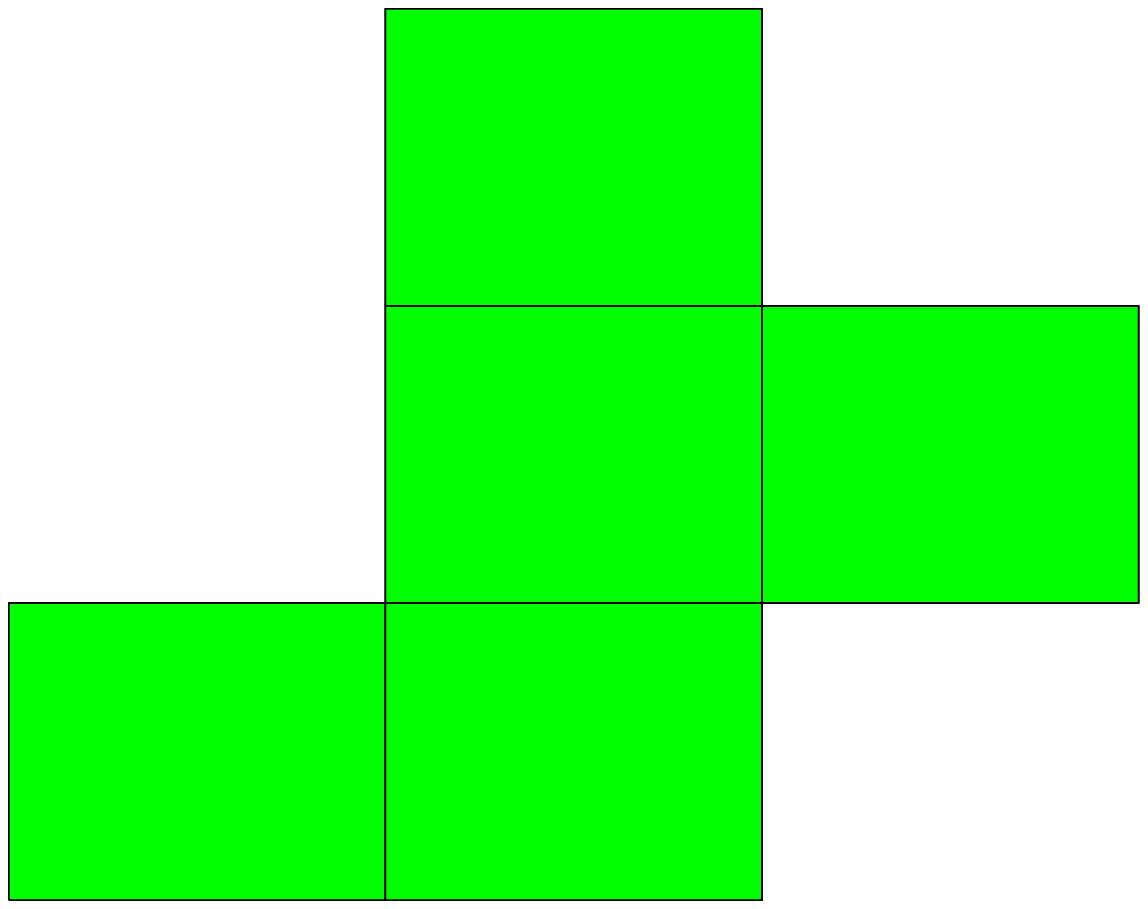}
\includegraphics[width=3cm]{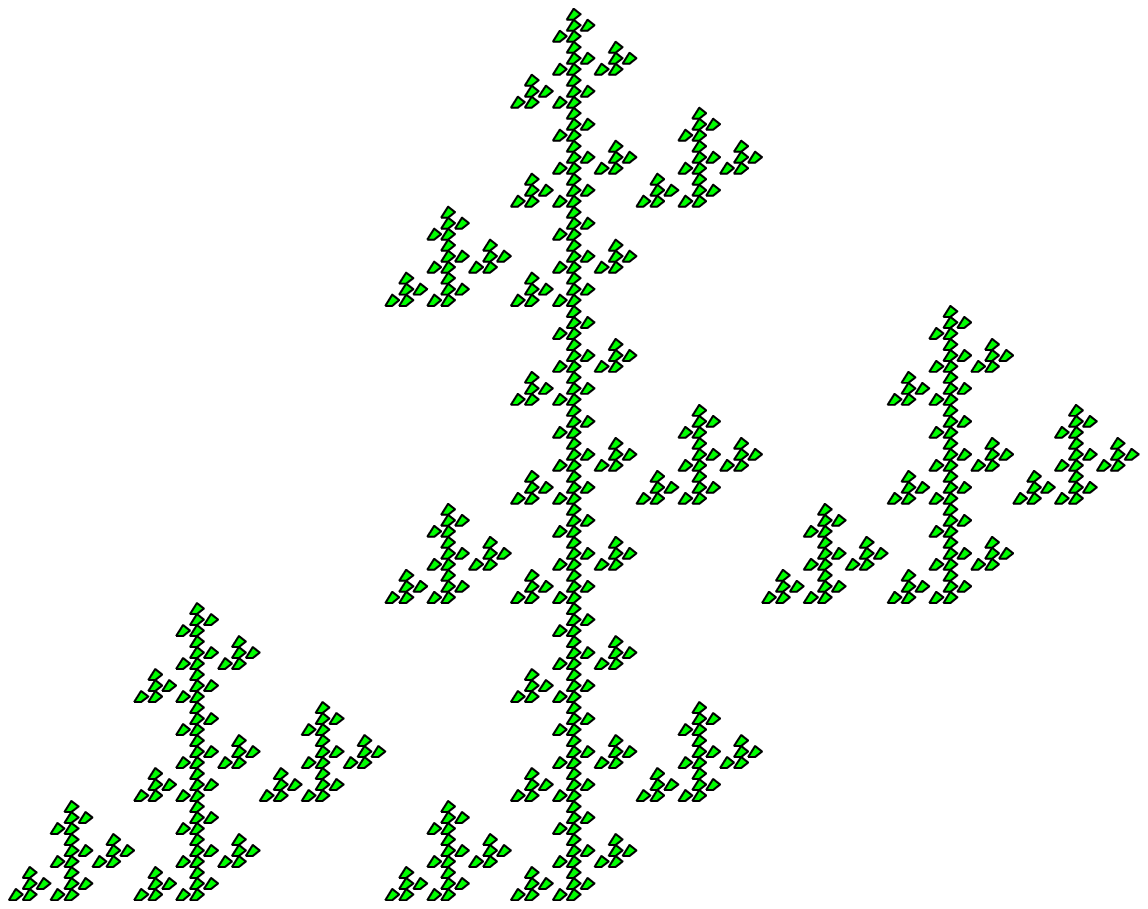}}
\subfigure[ $F_{6}$]{
\label{fig:2:f}
\includegraphics[width=3cm]{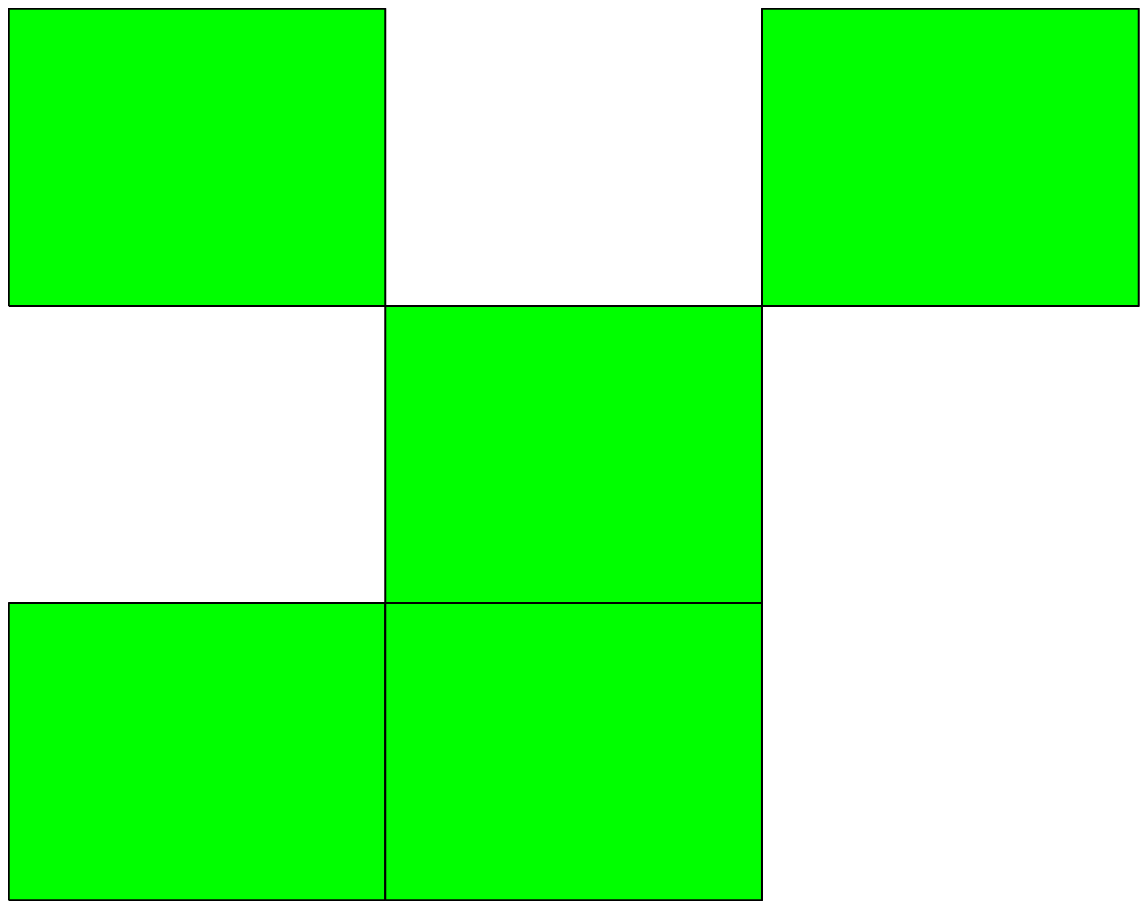}
\includegraphics[width=3cm]{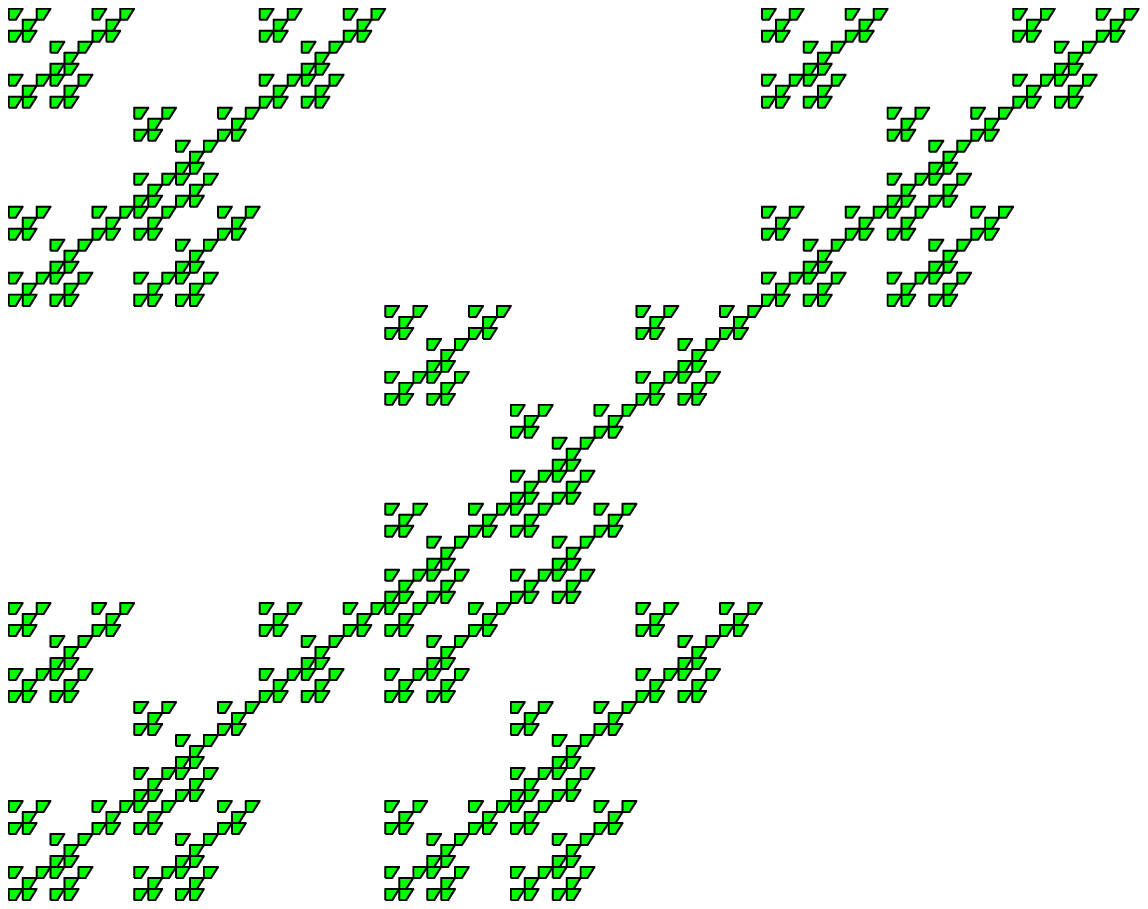}}
\caption{Neither connected nor totally disconnected fractal squares in ${\mathcal F}_{3,5}$.}
\label{fig:2}
\end{figure}

Question 1 is still open. %We tend to think  the answer is negative.
The main purpose of this paper is to show that  $F_{3}$ and  $F_{5}$ in Figure \ref{fig:2:c} and Figure \ref{fig:2:e} are Lipschitz equivalent.
For convenience, in what follows,
we shall denote  $F_{3}$  and $F_{5}$ by $E$ and $F$, respectively.
Precisely, the digit set of $E$ is
\begin{equation}\label{1.2}
\mathcal{D}_E=\big\{0,1,2,2+i,1+2i\big\},
\end{equation}
%We denote by $\{S_i\}_{i=1}^5$ the corresponding  IFS of  $E$. See Figure 2(a).
and the digit set of $F$ is
\begin{equation}\label{1.3}
\mathcal{D}_F=\big\{1+2i,1+i,1,0,2+i\big\}.
\end{equation}
%We denote by  $\{T_i\}_{i=1}^5$ the corresponding IFS of  $F$. See Figure 2(b).
Let ${\mathcal H}^s$ denote the $s$-dimensional Hausdorff measure, see \cite{Fal90}.

\begin{theorem}\label{thm-main}
There exists a    bi-Lipschitz map $f:~E\to F$ such that $f$ is measure-preserving in the sense that
for any Borel set  $B\subset E,$
$$
\frac{{\mathcal H}^s(B)}{{\mathcal H}^s(f(B))}=c, 
$$
 where $s=\dim_H E=\log 5/\log 3$ and $c={{\mathcal H}^s(E)}/{{\mathcal H}^s(F)}$.
\end{theorem}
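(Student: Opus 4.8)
The plan is to realize $f$ as the point map induced by a bounded-delay recoding of the common symbolic space, the recoding being driven by a finite state automaton. Write $\Sigma=\{1,2,3,4,5\}^{\mathbb N}$ and let $\pi_E,\pi_F\colon\Sigma\to\R^2$ be the coding maps attached to $\mathcal D_E$ and $\mathcal D_F$, so that $\pi_E(\Sigma)=E$ and $\pi_F(\Sigma)=F$; a level-$k$ cylinder is carried by either map onto a set of diameter comparable to $3^{-k}$. Let $\nu$ be the uniform Bernoulli measure on $\Sigma$. Since both IFS satisfy the open set condition, the normalized measures $\mathcal H^s|_E/\mathcal H^s(E)$ and $\mathcal H^s|_F/\mathcal H^s(F)$ coincide with the self-similar measures $(\pi_E)_*\nu$ and $(\pi_F)_*\nu$, each assigning every level-$k$ cylinder mass $5^{-k}$. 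Thus, if I can produce a bijection of the form $f=\pi_F\circ T\circ\pi_E^{-1}$ where $T\colon\Sigma\to\Sigma$ is a measure-preserving bijective relabelling, then $f_*\big(\mathcal H^s|_E\big)$ is proportional to $\mathcal H^s|_F$ and the measure-preserving identity with $c=\mathcal H^s(E)/\mathcal H^s(F)$ is automatic.

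First I would analyze the two identification structures separately. Because $E$ and $F$ are neither connected nor totally disconnected, $\pi_E$ and $\pi_F$ are far from injective, and the obstruction to naively taking $f=\pi_F\circ\pi_E^{-1}$ is exactly that $\pi_E(\omega)=\pi_E(\omega')$ need not force $\pi_F(\omega)=\pi_F(\omega')$. For a fractal square with $n=3$ two distinct cells of the grid can meet only along a common edge or at a common corner, so there is only a finite list of \emph{relative position types} $v=d_j-d_{j'}$ for which the corresponding cells overlap. Feeding the set equation \eqref{eq-1.1} into a touching pair produces touching pairs one level down, and I expect this procedure to close up on a finite set of types; this closure is precisely the assertion that the identification relation on $\Sigma$ induced by each of $\pi_E,\pi_F$ is recognized by a finite state automaton.

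The central construction is then a finite state transducer $T$ whose states are these position types together with a state for the generic non-touching situation. Reading $\omega\in\Sigma$ symbol by symbol, $T$ outputs $T(\omega)\in\Sigma$ by permuting the branch chosen at each step according to the current state and updating the state by the rule dictated by how the selected $E$-cell sits relative to its neighbours versus how the matched $F$-cell must sit. The design requirement is two-sided compatibility with identifications: $\pi_E(\omega)=\pi_E(\omega')$ if and only if $\pi_F(T(\omega))=\pi_F(T(\omega'))$. Granting this, $f=\pi_F\circ T\circ\pi_E^{-1}$ is a well-defined bijection $E\to F$. Because a state-dependent permutation of the alphabet sends independent uniform symbols to independent uniform symbols, $T_*\nu=\nu$, which gives the measure statement; and because $T$ has bounded delay --- the first $k$ output symbols depend on only the first $k+O(1)$ input symbols and conversely --- a level-$k$ cylinder of $E$ is carried into a bounded union of level-$k$ cylinders of $F$ and vice versa, converting symbolic depth into Euclidean scale through the ratio $1/3$ and yielding the two-sided Lipschitz bound.

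The main obstacle, and the step that forces the specific digit sets \eqref{1.2} and \eqref{1.3} to be used, is proving that the automaton can be built to respect \emph{all} identifications at once and bijectively. Matching a single touching type by relabelling branches is easy, but the relabellings demanded by different touchings along a shared boundary must be mutually consistent, and the state set must genuinely close under the transition rule instead of proliferating. I would meet this by listing the types explicitly, writing out the finite transition table, and checking by a finite case analysis that the induced identifications agree on the two sides, that a candidate inverse transducer composes with $T$ to the identity on the quotient so that $f$ is onto and one-to-one, and that no two distinct points of $E$ are collapsed. Once the table is verified to be consistent and of bounded delay, the bi-Lipschitz and measure-preserving conclusions follow immediately from the cylinder description above.
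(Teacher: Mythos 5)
Your overall philosophy (a finite-state recoding of the symbolic space inducing the point map) is the same as the paper's, but as written the proposal has two genuine gaps, one of which is the central difficulty of the whole problem. The fatal step is the claim that bounded delay of the transducer yields the bi-Lipschitz estimate because ``a level-$k$ cylinder of $E$ is carried into a bounded union of level-$k$ cylinders of $F$ and vice versa.'' That inference is only valid for totally disconnected fractal squares, where distinct cylinders of the same level are metrically separated at scale $3^{-k}$. Here $E$ and $F$ are \emph{not} totally disconnected: two points $x,y$ lying in different level-$k$ cylinders can be at distance far smaller than $3^{-k}$ (take points straddling a common edge of two touching cells, chosen deep in the construction). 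So knowing which cylinders $f$ maps where controls nothing about $|f(x)-f(y)|/|x-y|$. This is exactly why the paper introduces the neighbor automaton and the separation number $\Lambda_K(\bx,\by)$, proves that $|x-y|\asymp 3^{-\Lambda_E(\bx,\by)}$ (Lemmas 3.2--3.3), and then establishes the key combinatorial estimate $|\Lambda_E(\bx,\by)-\Lambda_F(g(\bx),g(\by))|\leq 3$ (Theorem 4.1) by a case analysis over the states $Exit$, $id$, $\pm e1$, $\pm e2$. Nothing in your proposal plays the role of that estimate, and without it the Lipschitz bounds do not follow.

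The second gap is that the construction of the transducer is entirely deferred (``listing the types explicitly, writing out the finite transition table, and checking by a finite case analysis''), yet that table \emph{is} the theorem: the specific segment substitution $35^k\mapsto 42^{k-1}5$, $42^k5\mapsto 42^{k-1}55$, $45\mapsto 35$ is where all the work lies. Moreover, your design requirement is stronger than what is achievable and than what is needed: you ask for a synchronous, state-dependent permutation $T$ on all of $\Sigma$ with $\pi_E(\omega)=\pi_E(\omega')$ if and only if $\pi_F(T\omega)=\pi_F(T\omega')$. The identification structures of $E$ and $F$ are not isomorphic in this naive sense ($E$ has both horizontal and vertical touching types $\pm e1,\pm e2$, while $F$ has only $\pm f1$, and the needed recoding is not letter-synchronous: the paper's transducer emits $0$ or $2$ output letters on some edges). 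The paper sidesteps this entirely by restricting to the countable dense sets $E'=\pi_E(\Omega)$, $F'=\pi_F(\Omega)$ of uniquely coded points, proving bi-Lipschitzness there, and extending by continuity; and it proves measure preservation not via $T_*\nu=\nu$ for a permutation transducer but by exhibiting the family ${\mathcal J}$ of words ending in $41$, on whose cylinders $f$ acts by $E_J\mapsto F_{g(J)}$ with $|g(J)|=|J|$. Your measure argument would be fine if your $T$ existed in the form you describe, but it does not, so that part also needs to be rerouted along the paper's lines.
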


In the study of Lipschitz equivalence of self-similar sets of totally disconnected type,
the strategy is to find  a common graph-directed structures  of the fractals   (see \cite{RRX06}\cite{XiXi10}). However, this method does not apply to fractals which are not totally
disconnected.
In this paper, we introduce some new ideas to deal with the problem.

First, we choose two countable dense subsets $E'\subset E$ and $F'\subset F$.
Secondly, we construct  a Lipschitz mapping between $E'$ and $F'$ via the coding space; to the this end, we
make use of  several finite state automata.  Finally, we extend to mapping to
the whole set by  continuity.

The paper is organized as follows. In Section 2, we construct a  map $f$ between $E'$ and $F'$.
In Section 3, we construct a finite state automaton related to a fractal square, which is a variation of the neighbor graph.
In Section 4, we show that the map $f$ is bi-Lipschitzian and measure-preserving.

\section{\textbf{Constructing a bi-Lipschitz mapping by a transducer}}

In this section, we construct a bi-Lipschitz mapping via the coding space by using a transducer.

\subsection{Dense subsets $E'\subset E$ and $F'\subset F$.}
Let $\mathcal{A}=\{1,\cdots, m\}$ be an alphabet.
Let $\mathcal{A}^{\infty}$ and $\mathcal{A}^{k}$ be the sets of infinite words and words of length $k$ over $\mathcal{A}$, respectively.
Let $\mathcal{A}^{\ast}=\bigcup_{k\geq0} \mathcal{A}^{k}$
be the set of all finite words.
%% Do we really need empty word?
%(Note that $\mathcal{A}^{0}=\{\varepsilon\}$,
%where $\varepsilon$ denotes the {\it empty word}.)
For $\sigma=\sigma_1\cdots \sigma_k\in \mathcal{A}^{\ast} $,
 we use $|\sigma|$ to denote the length of $\sigma$.
If $\rho=\rho_1\cdots \rho_n \in \mathcal{A}^{\ast}$,
we define $\sigma\rho=\sigma_1\cdots \sigma_k \rho_1\cdots \rho_n$.
%For $w=i_1\cdots i_k\cdots\in \mathcal{A}^\infty$,~
%denote $\omega|_k=i_1\cdots i_k$ be the truncation of $\omega$.
We shall use $a^\ell$ to denote the word $a\dots a$ consisting of $\ell$ number of $a$,
and understand $a^0$ as the emptyword for convention.
Let  $\omega\wedge \gamma$ be the maximal common prefix of $\omega$ and $\gamma$.

Let $\{\varphi_j\}_{j=1}^m$ be an IFS with attractor $K$.
For $x_1\cdots x_i\in\mathcal{A}^{\ast}$,
denote
$$\varphi_{x_1\cdots x_i}=\varphi_{x_1} \circ \cdots\circ \varphi_{x_i}.$$
Let $\bx=(x_i)_{i=1}^{\infty} \in\mathcal{A}^{\infty}$.
Defined $\pi_{K}:\mathcal{A}^{\infty}\to K$, which we call a \emph{projection},
 by
 \begin{equation}
\big\{\pi_{K}(\bx)\big\}=\bigcap_{i\geq1} \varphi_{x_1\cdots x_i}(K).
\end{equation}
If $\pi_{K}(\bx)=x$,
then we call the sequence $\bx$  a \emph{coding} of $x$.

\begin{figure}
  % Requires \usepackage{graphicx}
  \includegraphics[height=0.25\textwidth]{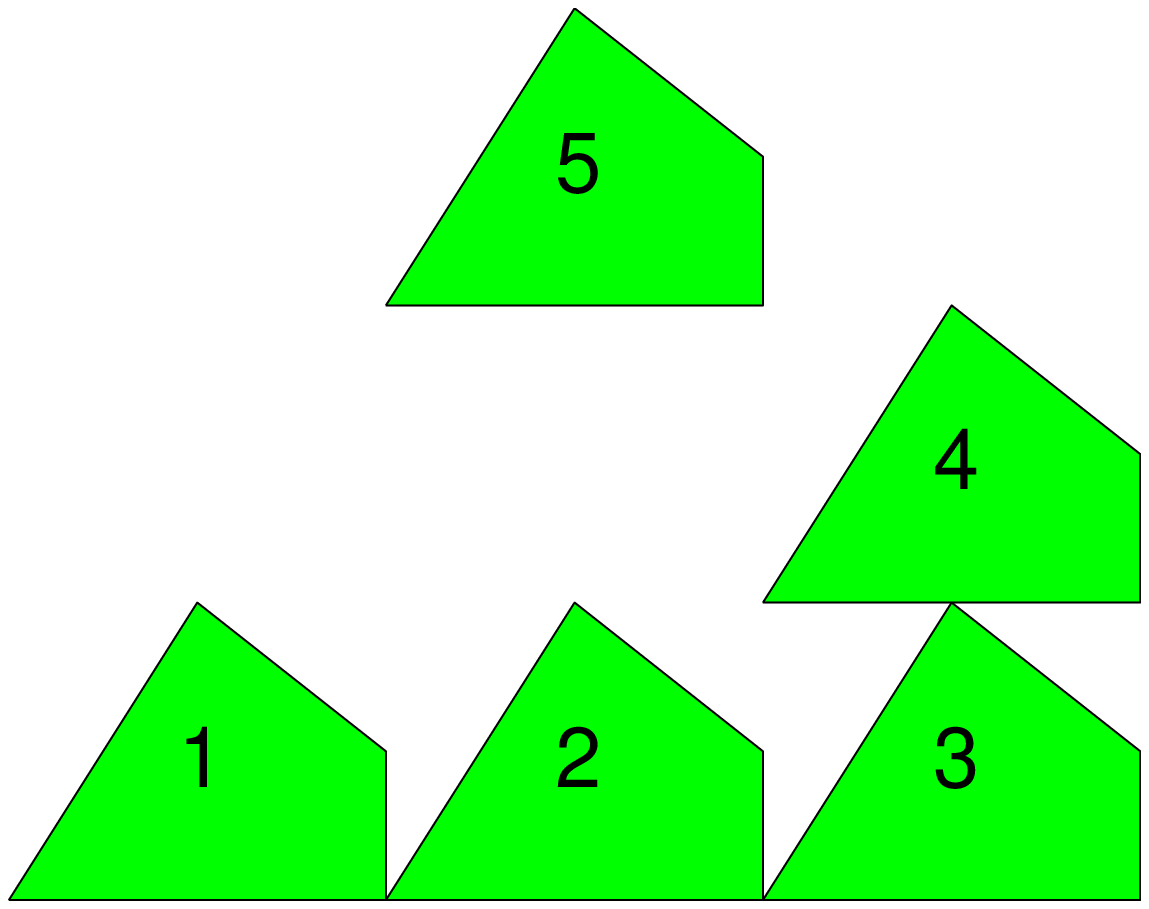}\quad
   \includegraphics[height=0.25\textwidth]{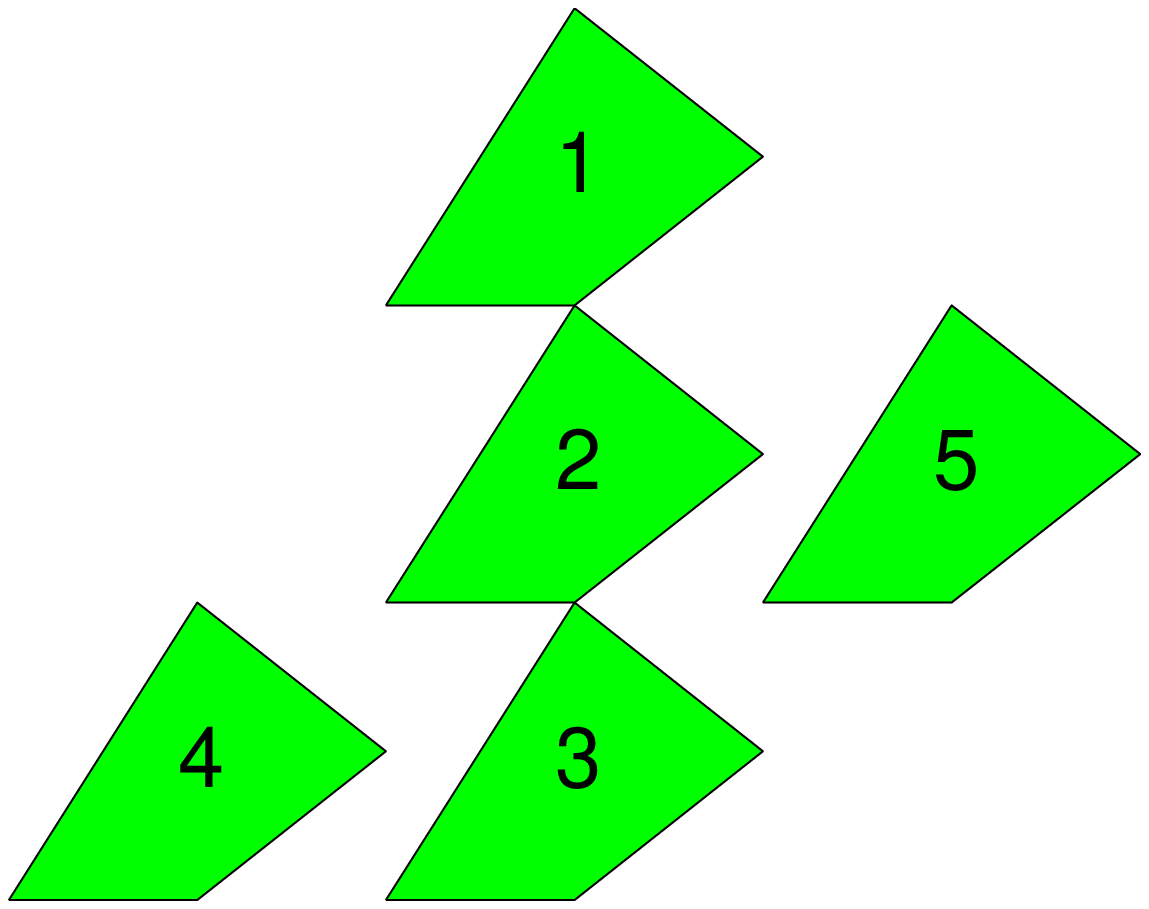}\\
  \caption{Labels of cylinders of $E$ and $F$.}
  \label{fig:labels}
\end{figure}

Recall that $E$ and $F$ are the fractal squares defined in Section 1.
For fractal $E$, we observe that
$$
\pi_{E}(13^{\infty})=\pi_{E}(21^{\infty}),\
\pi_{E}(23^{\infty})=\pi_{E}(31^{\infty}),\
\pi_{E}(35^{\infty})=\pi_{E}(42^{\infty}).
$$
(See Figure \ref{fig:labels}.) Clearly,
a  point $x\in E$ has  double codings  if one of its codings  ending with
$13^{\infty}$, $21^{\infty}$, $23^{\infty}$,
$31^{\infty}$, $35^{\infty}$ and $42^{\infty}$; otherwise, $x$ has a unique coding.
Denote
\begin{equation}
\Omega=\{{\mathbf \omega}4^{\infty};~{\mathbf \omega}\in \{1,\dots,5\}^*\}
\end{equation}
to be the set of sequences ending with $4^\infty$, and set
\begin{equation}
E^{'}=\pi_E(\Omega).
\end{equation}
%Then $\pi_E|_{\Omega_E}:\Omega_E\to E^{'}$ is a bijection.

\begin{remark}{\rm The reason we chose the above subset $E'$ is it is a countable dense subset of $E$, and each point in $E'$ has a unique coding.
 In fact, the other choice of $E'$ satisfying these conditions also works.}
 \end{remark}
 
Similarly, set
\begin{equation}
F'=\pi_F(\Omega),
\end{equation}
then $F'$ is a dense subset of $F$, and each point in $F'$ has a unique coding.
(Notice that in $F$, the basic double coding points are given by  $\pi_F(13^\infty)=\pi_F(21^\infty)$ and  $\pi_F(23^\infty)=\pi_F(31^\infty)$.)

\subsection{Segment decomposition}
Now, we introduce a decomposition of  sequences in $\Omega$, which plays a crucial role in the paper.
\begin{defi}
\rm
Let $\bx=(x_i)_{i=1}^{\infty}  \in \Omega$.~
A factor  $x_j\cdots x_{j+k}$ of $\bx$ with $k\geq 1$  is called a \emph{$E$-special segment},
if one of the following condition holds:

\ $(i)$   $x_j\cdots x_{j+k}=35^k$ and $x_{j+k+1} \neq 5$;

$(ii)$  $x_j\cdots x_{j+k}=42^{k-1}5$.
\end{defi}

The following lemma asserts that the special segments of  a sequence are non-overlapping.
\begin{lemma}\label{lem-non-overlap}
Let $\bx=(x_i)_{i=1}^{\infty}  \in \Omega$.
If  $X_1=x_m\cdots x_{m+k}$ and  $X_2=x_n\cdots x_{n+\ell}$
are two distinct special segments of $\bx$
and $m\leq n$,
then $m+k<n$.
\end{lemma}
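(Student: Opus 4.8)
The plan is to argue by contradiction, exploiting the rigid symbolic shape of the two types of special segments. The crucial observation is that a special segment always \emph{begins} with either $3$ (type $(i)$) or $4$ (type $(ii)$), whereas every symbol occurring strictly after its initial position lies in $\{2,5\}$. Indeed, in a type $(i)$ segment $35^k$ the entries at positions $m+1,\dots,m+k$ are all $5$, and in a type $(ii)$ segment $42^{k-1}5$ the entries at positions $m+1,\dots,m+k-1$ are all $2$ while the entry at position $m+k$ is $5$. In particular, the alphabet of interior symbols, $\{2,5\}$, is disjoint from the alphabet of admissible starting symbols, $\{3,4\}$, so no special segment can begin at an interior position of another.

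Concretely, suppose the conclusion fails, i.e. $n\le m+k$; together with the hypothesis $m\le n$ this yields $m\le n\le m+k$. First I would dispose of the coincident case $n=m$: here $X_1$ and $X_2$ start at the same index, so $x_m=x_n$ fixes their common type ($3$ forces $(i)$, $4$ forces $(ii)$), and within that type the segment is uniquely determined --- by the maximality condition $x_{m+k+1}\ne 5$ in case $(i)$, and by the location of the first non-$2$ symbol (which, for validity, must be the terminal $5$) in case $(ii)$. Hence $X_1=X_2$, contradicting their distinctness. It then remains to treat $m<n\le m+k$, so that $x_n$ is a symbol strictly after the initial position of $X_1$. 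By the opening observation $x_n\in\{2,5\}$; but $X_2$ is itself a special segment and must begin with $3$ or $4$, forcing $x_n\in\{3,4\}$ --- a contradiction. Therefore $n>m+k$, which is exactly the desired inequality $m+k<n$.

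I do not expect a serious obstacle: the whole content of the lemma is the elementary alphabetic rigidity described above, and the argument never needs to look at the projection $\pi_E$ or at the geometry of $E$, only at the forbidden-symbol structure of the two templates. The one place meriting slight care is the coincident case $n=m$, where the alphabet restriction alone does not apply and one must instead invoke the uniqueness of the special segment rooted at a fixed index. Everything else reduces to reading off the entries of $35^k$ and $42^{k-1}5$.
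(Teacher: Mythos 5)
Your proof is correct and follows essentially the same route as the paper's: both rest on the alphabetic rigidity that a special segment starts with $3$ or $4$ while every letter after its initial position lies in $\{2,5\}$, supplemented by a prefix/uniqueness argument when the two segments are rooted at the same index. The only difference is organizational (you split cases by starting position, the paper splits by segment type), so nothing further is needed.
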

\begin{proof}
If both $X_1$ and $X_2$ are of type (i) or are of type (ii), clearly they cannot overlap, for otherwise
one will be a proper prefix of the other, which is impossible. If $X_1$ and $X_2$ are of different type, then
the first letter of $X_2$ does not appear in $X_1$, so they do not overlap.
\end{proof}

A entry $x_j$  of $\bx$ is  called a \emph{$E$-plain segment} ,
if $x_j$ does not  belong to any $E$-special segment.
In what follows,
by a \emph{segment},
we mean  a plain segment or a special segment. Clearly, the collection of all possible $E$-segments is
\begin{equation}
\mathcal{A}_E
    =\big\{35^k;k\geq1\big\}\cup\big\{42^k5;k\geq 0\big\}\cup\big\{1,2,3,4,5\big\}.
\end{equation}

\begin{defi}\rm
Let $\bx=(x_i)_{i=1}^{\infty} \in \Omega$.
By Lemma \ref{lem-non-overlap},
 $\bx$ can be uniquely written as
\begin{equation}\label{2.6}
\bx=\prod_{j=1}^\infty X_j:=X_1X_2\dots
\end{equation}
where $X_j$ are $E$-segments of $\bx$.
We call the right-hand side of  \eqref{2.6} the \emph{$E$-segment decomposition} of $\bx$.
 \end{defi}

For example,
the $E$-segment decomposition of  $235^342^35^2434^{\infty}$ is
$(2)(35^3)(42^35)(5)(4)(3)(4)^{\infty}$.

Similarly, we can define the $F$-segment decomposition of $\bu\in \Omega$.

\begin{defi}\rm For $\bu=(u_i)_{i=1}^\infty$,
a factor $u_j\dots u_{j+k}$ with $k\geq 1$ is called an \emph{$F$-special segment} if
one of the following condition is satisfied:

\ \ $(i)$ $u_j\dots j_{j+k}=42^{k-2}5$ and $j_{j+k+1}\neq 5$;

\ $(ii)$ $u_j\dots j_{j+k}=42^{k-2}55$;

$(iii)$ $u_j\dots j_{j+k}=35$.
\end{defi}

A entry $u_j$ is called an \emph{$F$-plain segment} if it does not belong to any $F$-special segment.
Similar to Lemma \ref{lem-non-overlap}, one can show that any $\bu\in \Omega$ has an $F$-segment decomposition
and it is unique.

\subsection{A mapping $g$ between symbolic spaces}
In the following,
we  construct a map $g$ from $\Omega$ to itself. First, we defined $g_{0}: \mathcal{A}_E \rightarrow \{1,\cdots,5\}^{\ast}$ by
\begin{equation*}
g_0:\left  \{
   \begin{array}{rl}
    35^{k} &\mapsto 42^{k-1}5,~ k\geq 1;\\
    42^{k}5&\mapsto 42^{k-1}55,  ~ k\geq 1;\\
     45&\mapsto 35;  \\
     a&\mapsto a,  ~{\rm    if}~a\in\{1,2,3,4,5\}.
   \end{array}
   \right .
\end{equation*}
Denote $\mathcal{A}_F=g_0\big(\mathcal{A}_E\big)$, then
\begin{equation}\label{AF}
  \mathcal{A}_F=\{42^{k}5;k\geq 0\}\cup\{42^{k}55;k\geq 0\}\cup\{1,2,3,4,5,35\}.
\end{equation}
Clearly,
 $g_0:\mathcal{A}_E\to\mathcal{A}_F$ is a bijection.

 We define $g: \Omega\rightarrow \{1,\cdots,5\}^{\infty}$  by
\begin{equation*}
g(\bx)=\prod_{j=1}^\infty g_0(X_j)
\end{equation*}
where $(X_j)_{j=1}^\infty$ is the $E$-segment decomposition of ${\rm\textbf{x}}$.
Clearly, $g(\Omega)\subset \Omega$.

\begin{lemma}\label{new-1}
 If $(X_j)_{j=1}^\infty$ is the $E$-segment decomposition of $\bx$, then
 the $F$-segment decomposition of $g(\bx)$ is $\prod_{j=1}^\infty g_0(X_j)$.
\end{lemma}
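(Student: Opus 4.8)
The plan is to reduce the claim to the \emph{uniqueness} of the $F$-segment decomposition, which is granted just before the statement. Since every block $g_0(X_j)$ lies in $\mathcal{A}_F$, the product $\prod_{j}g_0(X_j)=g(\bx)$ is already \emph{a} factorization of $g(\bx)$ into $F$-segments. Hence it will be enough to show that this particular factorization obeys the defining rules of the $F$-segment decomposition: each non-singleton block is genuinely an $F$-special segment of $g(\bx)$ with the required context, and each singleton block $g_0(X_j)\in\{1,2,3,4,5\}$ is $F$-plain, i.e.\ is not swallowed by any $F$-special segment straddling a block boundary. Uniqueness then forces $\prod_j g_0(X_j)$ to be exactly the $F$-segment decomposition of $g(\bx)$.

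A preliminary fact I would record first is that $g_0$ preserves ``non-$5$ initials'': inspecting the four cases defining $g_0$, a block $g_0(X)$ begins with $5$ if and only if $X=5$; equivalently, if $X_{j+1}$ does not begin with $5$, neither does $g_0(X_{j+1})$. This single observation drives the whole boundary analysis, since the dangerous $F$-patterns $42^{\ast}5$, $42^{\ast}55$ and $35$ all hinge on where a $5$ may sit. I would also note that every non-singleton image ends in $5$, while every singleton block is a single letter; this controls what the last letter of a preceding block can be.

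Next I would treat the non-singleton blocks. For a block $42^{k-1}5$ coming from $X_j=35^{k}$: since $35^{k}$ is a type-(i) $E$-special segment, its maximality forces the entry after it to differ from $5$, so $X_{j+1}$ begins with a letter $\neq5$, whence $g_0(X_{j+1})$ begins with a letter $\neq5$ by the preliminary fact; thus $42^{k-1}5$ is followed in $g(\bx)$ by a non-$5$ and is a bona fide type-(i) $F$-special segment. The blocks $42^{k-1}55$ (images of $42^{k}5$) and $35$ (image of $45$) match the type-(ii) and type-(iii) $F$-patterns verbatim, with the $2$-run of the correct maximal length, and so are recognized unconditionally.

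The hard part is the last step, verifying that each singleton block is $F$-plain; this is where the combinatorics of the $E$-decomposition must be converted into constraints on neighbours. I would argue letter by letter. A singleton $1$ is harmless, since $1$ never occurs in an $F$-special segment. A singleton $3$ could only initiate a $35$, but $3$ being $E$-plain means its successor in $\bx$ is $\neq5$, so after $g_0$ it is still followed by a non-$5$ and cannot start a $35$. A singleton $4$ could only initiate a $42^{\ast}5$ or $42^{\ast}55$; being $E$-plain, the maximal $2$-run following it in $\bx$ (whose entries are themselves $E$-plain and fixed by $g_0$) ends in a letter $\neq5$, so in $g(\bx)$ one sees $42^{\ast}(\text{non-}5)$ and no such pattern forms. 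For a singleton $2$, any $F$-special segment containing it must reach back through a run of $2$'s to an initial $4$, and the only way a $4$ can sit immediately left of a $2$-block is as a singleton $4$, whose run is never terminated by a $5$ by the previous case. Finally, for a singleton $5$: if the preceding block is non-singleton it ends in $5$, and one checks directly that $42^{k-1}555$, $355$, etc.\ are split by the greedy rule so that our $5$ stays plain; and if the preceding block is a singleton, then absorbing our $5$ into a $42^{\ast}5$, $42^{\ast}55$ or $35$ would regress to a configuration of plain singletons $42^{\ast}5$ or $35$ in $\bx$, which is already an $E$-special segment, a contradiction. Having shown every non-singleton block is the correct $F$-special segment and every singleton block is $F$-plain, the factorization satisfies the definition of the $F$-segment decomposition, and uniqueness concludes the proof. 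I expect the $5$-related cases, namely the interplay of the type-(i) and type-(ii) $F$-segments around runs of $5$'s, to require the most care.
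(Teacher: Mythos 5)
Your proposal is correct and follows essentially the same route as the paper: show that each block $g_0(X_j)$ is a genuine $F$-segment of $g(\bx)$ (the special blocks carry the required context because maximality of the $E$-segment $35^k$ forbids a following $5$, and the plain letters stay plain), then invoke uniqueness of the $F$-segment decomposition. The only divergence is in the plain-letter step, where the paper replaces your five-way case analysis by a single uniform reductio: if the image of a plain letter sat inside an $F$-special segment, non-overlapping would force that entire segment to consist of images of plain letters, hence to occur verbatim as a factor of $\bx$ containing the $E$-special pattern $42^{k}5$ or $35$ --- a contradiction.
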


\begin{proof} Denote $\bu=g(\bx)$. We need only show that $g_0(X_j)$ is an $F$-segment of $\bu$ for all $j$.

First, we show that $g$ maps an $E$-special segment to an $F$-special segment.
If $X_j=42^k5$ with $k\geq 1$, then $g(X_j)=42^{k-1}55$ is a $F$-special segment by definition.
The same holds if $X_j=45$.  If $X_j=35^k$, then the letter next to $X_j$, which we denote by $x_m$, cannot be  $5$.
So $g(X_j)=42^{k-1}5$, and the letter next to $g(X_j)$, as the initial letter of $g(X_{j+1})=g(x_m\dots)$, cannot by $5$.
It follows that $g(X_j)$ is an $F$-special segment.

Now we consider the case the $X_j$ is a plain segment. Suppose $g(X_j)$ belongs to an $F$-special segment
$u_p\dots u_{p+k}$. By the conclusion of the preceding paragraph, each $u_i, p\leq i\leq p+k$, is an image of
an $E$-plain segment under $g_0$. Hence  $x_p\dots x_{p+k}=u_p\dots u_{p+k}$ is a factor consisting of $E$-plain segment,
which is impossible since it contains a factor $35$ or $42^{k-1}5$.
\end{proof}

\begin{thm} $g(\Omega)=\Omega$ and $g:\Omega\to \Omega$ is a bijection.
\end{thm}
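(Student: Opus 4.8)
The plan is to reduce the statement to the combinatorics of segment decompositions. Since $g(\Omega)\subset\Omega$ is already known, it suffices to show that $g$ is injective and that $g(\Omega)=\Omega$. Recall that every $\bx\in\Omega$ has a unique $E$-segment decomposition $\bx=\prod_{j=1}^\infty X_j$ and every $\bu\in\Omega$ a unique $F$-segment decomposition $\bu=\prod_{j=1}^\infty U_j$; these two facts set up bijections between $\Omega$ and the sets of admissible $E$- and $F$-segment sequences. By definition $g(\bx)=\prod_{j=1}^\infty g_0(X_j)$, and Lemma \ref{new-1} tells us that this product is precisely the $F$-segment decomposition of $g(\bx)$. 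Thus, under the two decomposition bijections, $g$ is conjugate to the segment-wise map $(X_j)_j\mapsto(g_0(X_j))_j$, and the whole theorem follows once this segment-wise map is shown to be a bijection between admissible $E$-sequences and admissible $F$-sequences.

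For injectivity, suppose $g(\bx)=g(\by)$, with $E$-segment decompositions $(X_j)_j$ and $(Y_j)_j$ respectively. By Lemma \ref{new-1}, both $(g_0(X_j))_j$ and $(g_0(Y_j))_j$ are $F$-segment decompositions of the common sequence $g(\bx)=g(\by)$, so the uniqueness of the $F$-segment decomposition gives $g_0(X_j)=g_0(Y_j)$ for every $j$. Since $g_0:\mathcal{A}_E\to\mathcal{A}_F$ is a bijection, $X_j=Y_j$ for all $j$, whence $\bx=\by$.

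For surjectivity, take $\bu\in\Omega$ with $F$-segment decomposition $\bu=\prod_{j=1}^\infty U_j$, set $X_j:=g_0^{-1}(U_j)$ and $\bx:=\prod_{j=1}^\infty X_j$. The tail of $\bu$ is $4^\infty$, and in a block of $4$'s no $F$-special segment can occur (every $F$-special segment ends in a $5$, and a pure block of $4$'s contains no $5$), so the terminal $4$'s are each single-letter $F$-segments; as $g_0^{-1}(4)=4$, the tail of $\bx$ is again $4^\infty$ and hence $\bx\in\Omega$. It then remains to verify that $(X_j)_j$ is genuinely the $E$-segment decomposition of $\bx$; granting this, $g(\bx)=\prod_{j=1}^\infty g_0(X_j)=\prod_{j=1}^\infty U_j=\bu$, which proves $g(\Omega)=\Omega$.

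The crux, and the step I expect to be the main obstacle, is this last verification, which is the exact mirror of Lemma \ref{new-1} with $g_0^{-1}$ in place of $g_0$; it must be argued separately because the special-segment patterns of $E$ and $F$ are governed by different local rules, in particular by look-ahead conditions of the form ``the next letter is not $5$''. I would prove it in two steps, paralleling the proof of Lemma \ref{new-1}. First, $g_0^{-1}$ sends an $F$-special segment to an $E$-special segment: the images $g_0^{-1}(42^\ell 55)=42^{\ell+1}5$ and $g_0^{-1}(35)=45$ are automatically $E$-special of type (ii), while for $g_0^{-1}(42^\ell 5)=35^{\ell+1}$ one must check the look-ahead condition that the following letter is not $5$. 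This holds because a type-(i) $F$-special segment $42^\ell 5$ forces its successor to begin with a letter $\neq 5$, so the successor is not the single segment $5$, and inspection of the table of $g_0^{-1}$ shows that the image of any segment other than $5$ begins with a letter in $\{1,2,3,4\}$; hence $35^{\ell+1}$ is followed by a non-$5$ and is a genuine $E$-special segment of type (i). Second, a maximal run of $F$-plain letters can contain neither the pattern $35$ nor a pattern $42^\ell 5$: an $F$-plain $3$ is never followed by a $5$ (otherwise it starts a $35$ segment), and any occurrence of $42^\ell 5$ is itself an $F$-special segment, so its letters are not $F$-plain. Consequently a block of $F$-plain letters contains no $E$-special segment, and the single-letter images $g_0^{-1}(a)=a$ remain $E$-plain. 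These two facts show that $(X_j)_j$ coincides with the $E$-segment decomposition of $\bx$, completing the proof.
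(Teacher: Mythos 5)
Your proof is correct and follows essentially the same route as the paper's: injectivity via Lemma \ref{new-1} together with the uniqueness of the $F$-segment decomposition, and surjectivity via the mirror statement that $g_0^{-1}$ carries the $F$-segment decomposition of $\bu$ to the $E$-segment decomposition of $\bx=\prod_j g_0^{-1}(U_j)$. The paper merely asserts this mirror statement as ``similar to Lemma \ref{new-1}''; your write-up supplies the details (the look-ahead check for $g_0^{-1}(42^\ell 5)=35^{\ell+1}$, the plain-letter analysis, and the verification that $\bx$ still ends in $4^\infty$), and they are sound.
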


\begin{proof} First, $g:\Omega\to\Omega$ is an injection. Otherwise,  by Lemma \ref{new-1},
$\bu=g(\bx)=g(\by)$ will have two different $F$-segment decompositions.

Similar to Lemma \ref{new-1}, we can show that if $(U_j)_{j=1}^\infty$ is the $F$-segment decomposition of $\bu$,
then $(g_0^{-1}(U_j))_{j=1}^\infty$ is the $E$-segment decomposition of $\bx=\prod_{j=1}^\infty g_0^{-1}(U_j)$. Consequently,
$\bu=g(\bx)$, which proves $g$ is surjective.
\end{proof}

\subsection{A transducer}
%\begin{remark}\rm
The map $g$ can be realized by the transducer indicated in Figure \ref{Transducer}.
The state set is
$
  \mathcal{M}=\{A,III,III^{'},IV,IV^{'}\}
$
where the  initial state is $A$.
The edge is labeled by $a/w$, where $a$ is the input letter and $w$ is the output word.
From any state, there are five edges going out with input letters $1,2,\dots, 5$ respectively.
Inputting a sequence $(a_n)_{n=1}^\infty$, then a unique path in the transducer in determined,
which we denoted by $(a_n/\omega_n)_{n=1}^\infty$,  and the output sequence is
$\omega_1\omega_2\dots \omega_n\dots$.

\begin{lemma}\label{lem-length}
Let $\bx=(x_i)_{i=1}^{\infty}\in\Omega$ and denote $g(\bx)$ by $\bu=(u_i)_{i=1}^{\infty}$.
Then $u_1\cdots u_n(n\geq1)$ is determined by $x_1\cdots x_{n+1}$.
Moreover,
if $x_{n+1}=1$,
then $u_{n+1}=1$ is also determined.
\end{lemma}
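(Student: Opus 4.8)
The plan is to reduce the computation of each output letter to a bounded, almost-local rule on the input, the only nonlocal ingredient being a look-back along a run of equal symbols. The engine behind this is the observation that $g_0$ is length-preserving: checking the four lines in its definition, $|g_0(W)|=|W|$ for every segment $W\in\mathcal{A}_E$ (for instance $35^k\mapsto 42^{k-1}5$ sends a word of length $k+1$ to one of length $k+1$, and $42^k5\mapsto 42^{k-1}55$ sends length $k+2$ to length $k+2$). Hence, writing the $E$-segment decomposition of $\bx$ as $X_1X_2\cdots$ and $\bu=g(\bx)=g_0(X_1)g_0(X_2)\cdots$, the blocks $X_j$ and $g_0(X_j)$ occupy exactly the same ranges of global positions. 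Consequently $u_i$ is simply the letter of $g_0(X_j)$ sitting at the same offset that $x_i$ occupies inside $X_j$, where $X_j$ is the segment containing position $i$. So the whole task is to decide, from a bounded window of the input, which segment position $i$ belongs to and whether it is a first, interior, or terminal letter of that segment.

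Next I would extract the explicit rule and verify it consults only $x_1\cdots x_{i+1}$. The letters $1,3,4$ are immediate: $x_i=1$ is always a plain segment, so $u_i=1$; $x_i=3$ gives $u_i=4$ if $x_{i+1}=5$ (it opens a block $35^k$) and $u_i=3$ otherwise; $x_i=4$ gives $u_i=3$ if $x_{i+1}=5$ (the block $45$) and $u_i=4$ otherwise, since the leading $4$ of every $42^k5$ and every plain $4$ both map to $4$. Here I use that $3$ and $4$ occur only as the initial letter of a segment, so each such occurrence does open a fresh segment. The two remaining letters carry the content. For $x_i=2$ the rule is $u_i=2$ unless $x_{i+1}=5$ and the maximal block of consecutive $2$'s containing $i$ is immediately preceded by a $4$; in that case $x_i$ is the terminal $2$ of a block $42^k5$ and $u_i=5$. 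Symmetrically, for $x_i=5$ the rule is $u_i=5$ unless $x_{i+1}=5$ and the maximal block of $5$'s containing $i$ is immediately preceded by a $3$; then $x_i$ is an interior $5$ of a block $35^k$ and $u_i=2$. In every case the look-back stays within $x_1\cdots x_i$ and the only look-ahead is $x_{i+1}$, so $u_i$ is a function of $x_1\cdots x_{i+1}$; applying this for $i=1,\dots,n$ gives that $u_1\cdots u_n$ is determined by $x_1\cdots x_{n+1}$. The final assertion is then immediate, since $x_{n+1}=1$ forces position $n+1$ to be a plain segment and $u_{n+1}=1$ with no further look-ahead.

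The hard part will be justifying the rules for $2$ and $5$, i.e.\ proving that the genuinely unbounded ambiguity in the segment parse never contaminates an output letter at a position $\le n$. Having read only $x_1\cdots x_{n+1}$ one cannot in general decide whether a trailing run $4\,2\cdots 2$ will eventually close into a block $42^k5$ or disintegrate into plain letters; the key point is that this undecidability is output-irrelevant, because an interior $2$ of $42^k5$ and a plain $2$ both map to $2$, and the two parses can differ only at the terminal $2$, which is visible exactly when $x_{i+1}=5$. The same mechanism, with the continuation of a $5$-run in place of a closing $5$, settles the case $x_i=5$. Thus the single extra symbol $x_{n+1}$ is precisely what is needed to fix the status of position $n$, matching the one-step look-ahead in the statement; a careful but routine case check against the segment decomposition (as in the verification of $g$ on the worked example $235^342^35^2434^{\infty}$) completes the argument.
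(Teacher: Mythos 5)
Your proof is correct, but it takes a genuinely different route from the paper's. The paper reads the statement directly off the transducer of Figure \ref{Transducer}: every edge label $a/w$ there satisfies $|a|=|w|=1$ except the edges leaving the state $A$ (which output nothing) and the edges entering $A$ from other states (which output two letters), so the output stream never lags the input by more than one symbol, and the input letter $1$ always returns the machine to $A$, restoring synchrony. You instead bypass the transducer entirely: you observe that $g_0$ preserves the length of each segment, so that input and output positions align, and then verify letter by letter that $u_i$ is a function of $x_1\cdots x_i$ together with the single look-ahead symbol $x_{i+1}$ --- the crux being, as you note, that the unbounded uncertainty about whether a trailing run $42\cdots 2$ or $35\cdots 5$ will eventually close into a special segment is output-irrelevant, since an interior $2$ of $42^k5$ and a plain $2$ have the same image, and likewise for the $5$'s; only the terminal letter is affected, and its status is exactly what $x_{i+1}$ reveals. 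Your argument is longer but self-contained with respect to the combinatorial definition of $g$ via segment decompositions, whereas the paper's two-line proof rests on the unproved (though easily checked) assertion that the transducer realizes $g$; in effect your case analysis is precisely the verification of that assertion, so it could serve to justify both the lemma and the transducer at once. Both arguments deliver the `moreover' clause the same way, since the letter $1$ always forms a plain segment fixed by $g_0$ (equivalently, always returns the transducer to $A$).
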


\begin{proof}Observe that the labels $a/w$ satisfies $|a|=|w|=1$, except  the labels of the edges leaving
the state $A$ (with $|a|=1$ and $|w|=0$) or entering the state $A$
(with $|a|=1$ and $|w|=2$) from other states. Moreover, if $a=1$, then we always arrive at the state $A$.
The lemma follows.
\end{proof}

\begin{figure}[h]
  \centering
  % Requires \usepackage{graphicx}
  \includegraphics[width=.750 \textwidth]{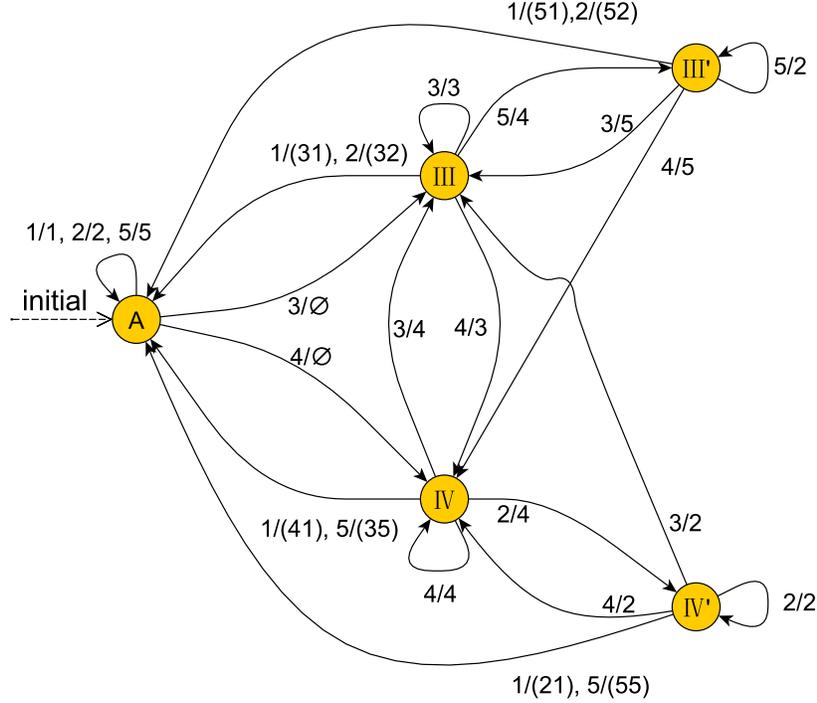}
  \caption{A transducer which realizes the map $g$}
\label{Transducer}
\end{figure}

\begin{remark}\label{g-finite} {\rm Let $J$ be a word ending with $1$, then $J$ determines a path ending at the state $A$,
and hence the transducer gives us an output word of length $|J|$, we shall denote this word by $g(J)$ and we will need this map in Section 4.
}
\end{remark}

\subsection{Construction of a bi-Lipschitz mapping}
We  define $f: E^{'}\to F'$ by
 \begin{equation}
 f(x)=\pi_F\circ g\circ \pi_E^{-1}(x).
 \end{equation}
We shall prove the follow theorem in section 4.

\begin{thm}\label{thm-dense-subset}
 $f: E^{'}\to F^{'}$ is a bi-lipschitz.
\end{thm}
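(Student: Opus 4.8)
\textbf{The plan} is to prove the bi-Lipschitz property by relating the metric on $E'$ (resp. $F'$) to the length of the common prefix of codings, and then showing that $g$ distorts prefix-lengths only by a bounded additive amount. The first step is to record the standard metric estimate for fractal cubes: for two points $x = \pi_E(\bx)$ and $y = \pi_E(\by)$ in $E$, if $\bx \wedge \by$ has length $n$, then $|x - y|$ is comparable to $3^{-n}$, in the sense that there are constants $c_1, c_2 > 0$ (depending only on the geometry of $E$, e.g.\ the diameter and the separation of the cylinders) with $c_1 \, 3^{-n} \le |x-y| \le c_2 \, 3^{-n}$. Here the upper bound is immediate since both points lie in a common cylinder $\varphi_{x_1 \cdots x_n}(E)$ of diameter $\asymp 3^{-n}$; the lower bound is the nontrivial direction and will require that the two length-$(n{+}1)$ cylinders $\varphi_{\bx \wedge \by\, x_{n+1}}(E)$ and $\varphi_{\bx \wedge \by\, y_{n+1}}(E)$ are separated by a definite distance, which is where the finite-state automaton / neighbor-graph analysis of Section 3 enters. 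The same estimate holds for $F'$ with its own constants.

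\medskip

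Granting these two-sided estimates, the bi-Lipschitz property of $f = \pi_F \circ g \circ \pi_E^{-1}$ reduces to a purely combinatorial statement about $g$: if $\bx, \by \in \Omega$ and $|\bx \wedge \by| = n$, then $\bigl| \, |g(\bx) \wedge g(\by)| - n \, \bigr|$ is bounded by a constant independent of $\bx, \by, n$. The key tool here is Lemma~\ref{lem-length}, which says the output $u_1 \cdots u_n$ is determined by the input $x_1 \cdots x_{n+1}$ (and one more output letter is pinned down when $x_{n+1} = 1$). Concretely, I would argue as follows. Let $w = \bx \wedge \by$ with $|w| = n$, so $\bx = w\,x_{n+1}\cdots$ and $\by = w\,y_{n+1}\cdots$ with $x_{n+1} \ne y_{n+1}$. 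Since the transducer is deterministic, feeding the common prefix $w$ into it produces a common output prefix; the two sequences $g(\bx)$ and $g(\by)$ can only diverge once the transducer reads the differing letters $x_{n+1}$ and $y_{n+1}$. The point is to bound how much output has been produced by the time the length-$n$ input prefix $w$ is consumed. Because each edge outputs a word of length $0$, $1$, or $2$, and these length-$0$ and length-$2$ edges occur exactly at edges leaving or entering the state $A$ (as noted in the proof of Lemma~\ref{lem-length}), the total output length after $n$ input symbols differs from $n$ by at most the net imbalance of $A$-transitions, which is controlled by a single state-dependent offset. Hence $|g(\bx) \wedge g(\by)| = n + O(1)$, and similarly the inverse map $g^{-1}$ (which is realized by the inverse transducer, by the bijection theorem) satisfies the same bound. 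This yields
$$
C^{-1}\, 3^{-n} \le |f(x) - f(y)| \le C\, 3^{-n} \asymp |x - y|
$$
for a uniform constant $C$, proving both Lipschitz inequalities simultaneously.

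\medskip

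The main obstacle is the lower metric bound $|x - y| \ge c_1 \, 3^{-n}$ for $x, y \in E$ (and likewise in $F$). Unlike the totally disconnected case, where distinct cylinders are uniformly separated and this bound is trivial, $E$ and $F$ are not totally disconnected, so two distinct length-$(n{+}1)$ cylinders may \emph{touch}. In that situation $|x-y|$ can be much smaller than $3^{-n}$, and the naive estimate fails. This is precisely the difficulty that motivates the automaton of Section 3: the relative position of two touching cylinders is governed by finitely many ``neighbor types,'' and one must show that even when cylinders touch, the actual points $x, y$ — constrained to lie in $E'$, i.e.\ to have codings ending in $4^\infty$ — remain separated at scale $3^{-n}$, or else that the prefix-length comparison survives the touching. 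I expect the bulk of the work in Section 4 to consist of translating the metric comparison into a statement purely about the coding distance $3^{-|\bx \wedge \by|}$ via this neighbor automaton, after which the combinatorial bound on $g$ described above closes the argument. The measure-preserving claim of Theorem~\ref{thm-main} should then follow by a separate (and more delicate) analysis, since a bi-Lipschitz map need not preserve Hausdorff measure up to a constant; one would need to verify that $g_0$ maps segments to segments of matching ``measure weight'' and invoke the self-similar structure of $\mathcal{H}^s$.
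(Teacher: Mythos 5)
There is a genuine gap, and it sits exactly where you flag it but do not resolve it. Your argument is built on the comparison $|x-y|\asymp 3^{-|\bx\wedge\by|}$, and the whole combinatorial step then only needs $\bigl||g(\bx)\wedge g(\by)|-|\bx\wedge\by|\bigr|=O(1)$ (which is indeed true, by Lemma \ref{lem-length} applied to $g$ and to $g^{-1}$). But the lower bound $|x-y|\geq c_1 3^{-|\bx\wedge\by|}$ is simply false here, and not in a way that can be patched while keeping $|\bx\wedge\by|$ as the controlling quantity: take $\bx=35^k14^\infty$ and $\by=42^k14^\infty$. Then $|\bx\wedge\by|=0$, yet the corresponding points of $E'$ are at distance $\approx 3^{-k}$ because the cylinders $\lfloor 35^j\rfloor$ and $\lfloor 42^j\rfloor$ touch for every $j\leq k$. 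The quantity that actually controls $|x-y|$ is the \emph{separation number} $\Lambda_E(\bx,\by)$, the exit time of the pair from the neighbor automaton of Section 3 (Lemmas \ref{lemma 3.2} and \ref{lemma 3.3}); in the example above $\Lambda_E=k+1$ while $|\bx\wedge\by|=0$, so the two quantities differ by an unbounded amount. Consequently your reduction ``bi-Lipschitz $\Leftarrow$ $g$ changes the common prefix length by $O(1)$'' does not go through: a map can preserve $|\bx\wedge\by|$ up to $O(1)$ and still collapse or blow up distances between points whose cylinders touch for a long time.

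What the paper actually proves, and what your proposal is missing, is Theorem \ref{thm-s-number}: $|\Lambda_E(\bx,\by)-\Lambda_F(g(\bx),g(\by))|\leq 3$. This is not a statement about output length of the transducer; it requires checking, case by case over the first nontrivial state of the itinerary ($Exit$, $id$, $\pm e1$, $\pm e2$, Lemmas \ref{lem-key-1}--\ref{lem-key-4}), that $g$ sends pairs of codings that stay neighbors in $E$ for $n$ steps to pairs that stay neighbors in $F$ for $n+O(1)$ steps. That is precisely where the specific design of $g_0$ (the cyclic shift $35^k\mapsto 42^{k-1}5\mapsto 42^{k-2}55$, $45\mapsto 35$) is used: e.g.\ the $E$-neighbors $35^k\cdots$ and $42^k\cdots$ (state $e2$) must be sent to $F$-words that remain $F$-neighbors (state $f1$) for comparably many steps. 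Your closing remark that one should ``translate the metric comparison into a statement purely about the coding distance $3^{-|\bx\wedge\by|}$'' points in the wrong direction; the translation must be into $3^{-\Lambda}$, and the compatibility of $g$ with $\Lambda$ is the substantive content of Section 4, not a consequence of the $O(1)$ prefix bound. (Your side remarks on the measure-preserving claim are also off target, but that claim is part of Theorem \ref{thm-main}, not of the statement under review.)
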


Consequently, the extension $f:E\to F$ is also a bi-lipschitz, which proves Theorem \ref{thm-main}.

\begin{figure}[h]
\centering
\subfigure[\text{The  cylinders $\lfloor 35 \rfloor \cup \lfloor 4\rfloor$ of $E$. }]{
\includegraphics[width=10cm]{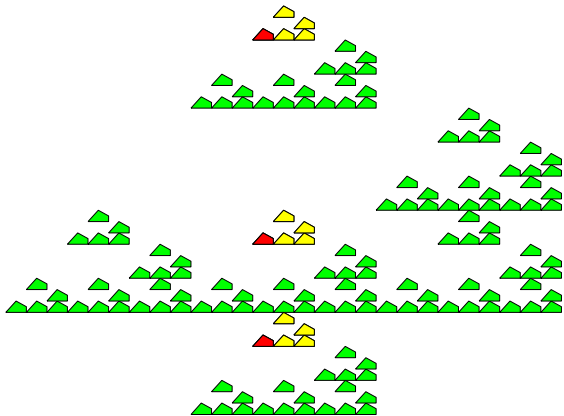}
 }
 \subfigure[The cylinders $\lfloor 3 \rfloor \cup \lfloor 4\rfloor $ of $F$.]{
\includegraphics[width=12cm]{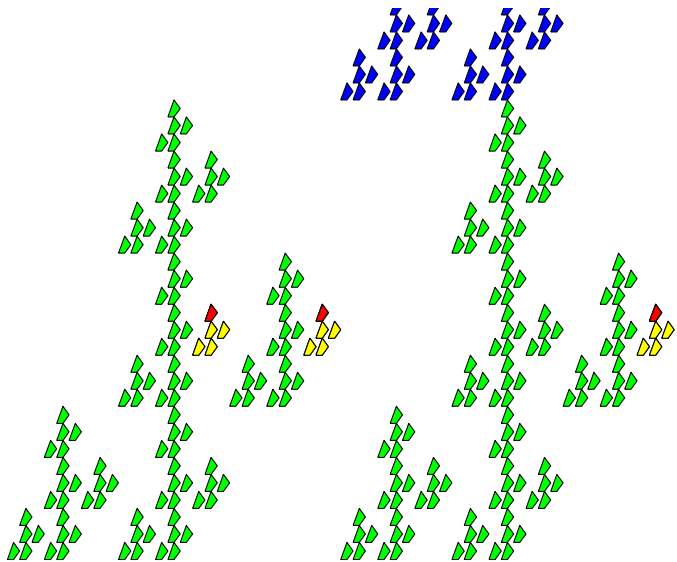}
}
\caption{The cylinders $3551$, $4251$ and $4551$ are indicated by red color.}%Fractal square E and F}
\label{fig:EF}
\end{figure}

\begin{remark} \rm Now, we give some intuition of the construction of $f$.
We shall denote $[\bomega]=\{\bomega\bx;~\bx\in \{1,2,\dots,5\}^\infty\}$ and call it a \emph{cylinder}.
%Let $\emptyset$ denote the empty word. Then
If $f$ maps  a cylinder $[\bomega]$  onto $[\bomega']$, we write $[\bomega]_E\mapsto [\bomega']_F$. Denote
$$
X_1=3551, ~X_2=4251, ~X_3=4551.
$$
Then under $f$,
\begin{equation*}
[X_1]_E\mapsto [X_{2}]_F,  ~[X_2]_E\mapsto [X_{3}]_F \text{ and }[X_3]_E\mapsto [X_1]_F.
\end{equation*}
In Figure \ref{fig:EF}, the cylinder $[X_1],[X_2]$ and $[X_3]$ are marked by red color.

In general, denote
\begin{equation*}
X_1=35^k1, X_2=42^{k-1}51, X_3=42^{k-2}5^21,\dots, X_{k+1}=45^k1,
\end{equation*}
then $f$ maps $[X_j]_E$ to $[X_{j+1}]_F$ for $1\leq j\leq k$, and maps $[X_{k+1}]_E$ to $[X_1]_F$.
We also note that
$$dist([X_j]_E, [X_{j+1}]_E)\approx 3^{j-k-1} \text{ for } j=1,\dots, k \text{ and } dist([X_{k+1}]_E, [X_1]_E)\approx 1,$$
and the same relations holds for their image under $f$.
\end{remark}

\section{\textbf{Neighbor graph and separation number}}
Let $K$ be a fractal square generated by the IFS $\Phi=\{\varphi_j\}_{j=1}^m$.
In this section, we introduce an automaton related to $\Phi$
to measure the distant of two points in $K$.

 For $I=i_1\cdots i_k,J=j_1\cdots j_k\in \{1,\cdots, m\}^k$ with $i_1\ne j_1$, we call $\varphi_I^{-1}\circ \varphi_J$ a \emph{neighbor map} if
$\varphi_I(K) \cap \varphi_J(K)\neq \emptyset.$
See for instance, \cite{BanM09}.
We use $\mathcal{N}=\mathcal{N}_{\Phi}$ (or ${\mathcal N}_K$) to denote the collation of all neighbor maps of $\Phi$.
Notice that  a neighbor map $\tau$ must have the form $\tau(x)=x+h$ for some $h\in\mathbb{C}$,
hence we shall denote a neighbor map simply by $h$.

Now we construct a \emph{automaton} relate to $\Phi$ as follows.
The \emph{state set} is
\begin{equation}
  \mathcal{N}^{\ast}=\mathcal{N}\cup\big\{id\big\}\cup\big\{Exit\big\},
\end{equation}
where
$id$ denotes the identity map.
The initial state is $\emph{id}$, and the terminate state is $Exit$.

Next, we define the edge set.
Let $h\in \mathcal{N}\cup\{id\}$.
For any $\displaystyle\binom{i}{j}\in \{1,\cdots,m\}^2$, let
$h'=\varphi_i^{-1}\circ h\circ \varphi_j$.
We define an edge from $h$ to $h'$
if $h^{'}\in\mathcal{N}\cup\{id\}$, and  we define an \emph{edge}  from $h$ to $Exit$ otherwise.
More precisely, we may denote the above edge  by $(h,(i,j))$.
%$\left(h,\displaystyle\binom{i}{j}\right)$.
We call the above automaton the $K$-automation.
It is a variation of the neighbor graph, see for instance, \cite{BanM09}.

Take $\bx=(x_i)_{i=1}^{\infty},\by=(y_i)_{i=1}^{\infty}\in \{1,\cdots,m\}^{\infty}$.
We feed the $K$-automaton with $\displaystyle\binom{x_n}{y_n}$ consecutively.
We denote $h_n=\varphi_{x_1\cdots x_n}^{-1}\circ\varphi_{y_1\cdots y_n}$,
and call $(h_n)_{n\geq1}$ the \emph{itinerary} of $\displaystyle\binom{\bx}{\by}$,
and write
\begin{equation*}
  (id)^0\to h_1 \to h_2\to \cdots \to h_n \to \cdots \to Exit.
\end{equation*}
We also say  $\displaystyle\binom{\textbf{x}}{\textbf{y}}$ \emph{arrives} at the state $h_n$ on the $n$-th step.

\begin{defi}
{\rm
For $\bx,\by\in \{1,\cdots,m\}^{\infty}$,
 defined the \emph{separation number}  $\Lambda_{K}(\bx,\by)$ of $\displaystyle\binom{\bx}{\by}$
to be $n$, if $\displaystyle\binom{\bx}{\by}$ arrives at the state $Exit$  on the $(n+1)$-th step.
}
\end{defi}

The following lemma is obvious.

\begin{lemma}
Let $\bx,\by\in\{1,\cdots,m\}^{\infty}$.
If $\varphi_{x_1\cdots x_n}^{-1}\circ\varphi_{y_1\cdots y_n}=h_n\in \mathcal{N}$,
then
\begin{equation}
\varphi_{y_1\cdots y_n}^{-1}\circ\varphi_{x_1\cdots x_n}=-h_n\in\mathcal{N}~{\rm and}~
\Lambda_{K}(\bx,\by)=\Lambda_{K}(\by,\bx).
\end{equation}
\end{lemma}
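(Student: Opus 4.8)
The plan is to exploit two elementary facts: that every map $\varphi_I^{-1}\circ\varphi_J$ is a pure translation, and that the neighbor relation is symmetric. Since $\varphi_j(x)=\frac1n(x+d_j)$, a length-$n$ composition $\varphi_{x_1\cdots x_n}$ has the form $x\mapsto n^{-n}x+c$ for a constant $c$ depending only on $x_1\cdots x_n$; hence $h_n=\varphi_{x_1\cdots x_n}^{-1}\circ\varphi_{y_1\cdots y_n}$ is a translation $x\mapsto x+h_n$, and its inverse $\varphi_{y_1\cdots y_n}^{-1}\circ\varphi_{x_1\cdots x_n}$ is the translation $x\mapsto x-h_n$. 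In the additive notation for neighbor maps this is exactly the asserted identity $\varphi_{y_1\cdots y_n}^{-1}\circ\varphi_{x_1\cdots x_n}=-h_n$.

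Next I would verify that $\mathcal N$ is closed under negation. By definition $h_n\in\mathcal N$ means $x_1\neq y_1$ together with $\varphi_{x_1\cdots x_n}(K)\cap\varphi_{y_1\cdots y_n}(K)\neq\emptyset$. Both conditions are symmetric in the two index words: $y_1\neq x_1$ and $\varphi_{y_1\cdots y_n}(K)\cap\varphi_{x_1\cdots x_n}(K)\neq\emptyset$ hold as well, so $\varphi_{y_1\cdots y_n}^{-1}\circ\varphi_{x_1\cdots x_n}=-h_n$ is again a neighbor map. This establishes the first assertion $-h_n\in\mathcal N$, and more generally the equivalence $h\in\mathcal N\Leftrightarrow -h\in\mathcal N$.

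For the equality of separation numbers I would compare the two itineraries directly. Feeding the $K$-automaton with $\binom{\bx}{\by}$ produces the state sequence $(h_n)$ with $h_n=\varphi_{x_1\cdots x_n}^{-1}\circ\varphi_{y_1\cdots y_n}$, while feeding it with $\binom{\by}{\bx}$ produces $(-h_n)$ by the computation above. The automaton avoids $Exit$ at step $n$ precisely when $h_n\in\mathcal N\cup\{id\}$. Since $-id=id$ and, by the previous paragraph, $h_n\in\mathcal N\Leftrightarrow -h_n\in\mathcal N$, we get $h_n\in\mathcal N\cup\{id\}\Leftrightarrow -h_n\in\mathcal N\cup\{id\}$ for every $n$. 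Hence the two itineraries first reach $Exit$ at the same step $n+1$, which gives $\Lambda_K(\bx,\by)=\Lambda_K(\by,\bx)$.

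The content is entirely routine and there is no genuine obstacle beyond bookkeeping. The one point that deserves care is to confirm that membership of a state in $\mathcal N\cup\{id\}$ depends only on the translation vector $h_n$, and not on the particular word representation $x_1\cdots x_n,\,y_1\cdots y_n$; once this is noted, the symmetry $h\leftrightarrow -h$ applies uniformly at every step and the exit times genuinely coincide.
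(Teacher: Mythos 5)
Your proof is correct and is exactly the routine verification the paper has in mind: the paper states this lemma without proof (``The following lemma is obvious''), and your argument --- neighbor maps are translations, the neighbor relation is symmetric under swapping the two index words so $\mathcal{N}$ is closed under $h\mapsto -h$, hence the two itineraries $(h_n)$ and $(-h_n)$ exit at the same step --- supplies precisely the missing details, including the correct observation that states of the automaton are identified with translation vectors. Nothing further is needed.
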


Now,
we consider the  fractal square $E$ defined by \eqref{1.2}.
We use $\mathcal{N}_E$ and  $\mathcal{N}_E^{\ast}$ to denote the neighbor map set and the state set of $E$-automaton,~respectively.
That is,
 \begin{equation}
  \mathcal{N}_E=\big\{e1,-e1,e2,-e2\big\}~~~{\rm and} ~~~\mathcal{N}_E^{\ast}=\mathcal{N}_E\cup\big\{id\big\}\cup\big\{Exit\big\},
 \end{equation}
where $e1=1$ and $e2=\bi$.
 See Figure \ref{fig:5}.
\begin{figure}[h]
%  \ing
%  % Requires \usepackage{graphicx}
\includegraphics[width=.7 \textwidth]{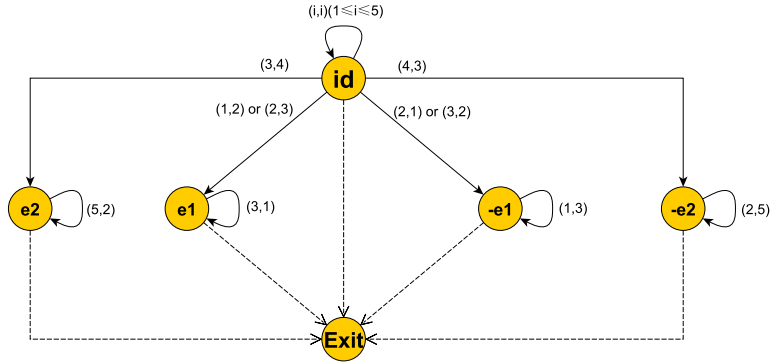}
\caption{$E$-automaton. The edges not labeled on the figure all lead to $Exit$ (along the dotted lines).}
\label{fig:5}
\end{figure}

For $x,y\in E^{'}$,
we shall show that the distance $|x-y|$ is controlled by $\Lambda_E(\bx,\by)$,
where $\bx,\by$ are the coding of $x,y$,~ respectively.
%We use $a\asymp b$ to denote that  $a$ and $b$ are comparable on the numerical value.

\begin{lemma}\label{lemma 3.2}
For $x,y\in E^{'}$,
let $\bx,\by\in\Omega$ be the coding of $x,y$,~ respectively.
Let $\Lambda_E(\bx,\by)=n$.
Then
\begin{equation*}
\frac{1}{c} 3^{-n}\leq |x-y|\leq c 3^{-n},
\end{equation*}
where $c=6\sqrt{2}$.
\end{lemma}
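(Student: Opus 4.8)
\textbf{Proof proposal for Lemma \ref{lemma 3.2}.}

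The plan is to establish the two-sided bound by relating the separation number $n = \Lambda_E(\bx,\by)$ to the geometric scale at which the cylinders containing $x$ and $y$ first fail to be neighbors. The key idea is that $n$ measures exactly the number of steps for which $\varphi_{x_1\cdots x_k}(E)$ and $\varphi_{y_1\cdots y_k}(E)$ remain neighbors in the sense of the $E$-automaton. First I would unwind the definition of $n$: by construction, $h_k = \varphi_{x_1\cdots x_k}^{-1}\circ\varphi_{y_1\cdots y_k}\in\mathcal{N}_E\cup\{id\}$ for $1\le k\le n$, and $\binom{\bx}{\by}$ arrives at $Exit$ on step $n+1$, meaning $h_{n+1}\notin\mathcal{N}_E^\ast\setminus\{Exit\}$. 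Since $\varphi_{x_1\cdots x_k}$ is a similarity with ratio $3^{-k}$, the condition $h_k\in\mathcal{N}_E\cup\{id\}$ translates into a uniform statement about the position of the two order-$k$ cylinders.

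\medskip

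For the \emph{upper bound}, I would argue that at step $n$ the two cylinders $\varphi_{x_1\cdots x_n}(E)$ and $\varphi_{y_1\cdots y_n}(E)$ either coincide or are neighbors, i.e. $h_n\in\mathcal{N}_E\cup\{id\}$. In either case these cylinders intersect or are translates by a vector of the form $h_n/\text{(scale)}$; concretely, $\varphi_{x_1\cdots x_n}(E)\cap\varphi_{y_1\cdots y_n}(E)\neq\emptyset$ when $h_n\in\mathcal{N}_E\cup\{id\}$. Since $x\in\varphi_{x_1\cdots x_n}(E)$ and $y\in\varphi_{y_1\cdots y_n}(E)$, and each cylinder has diameter $\sqrt{2}\cdot 3^{-n}$ (being a similar copy of the unit square's attractor scaled by $3^{-n}$), the triangle inequality gives $|x-y|\le 2\sqrt{2}\cdot 3^{-n}\le c\,3^{-n}$. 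The neighbor maps $\pm e1,\pm e2$ all being unit translations is what keeps the constant uniform.

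\medskip

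For the \emph{lower bound}, I would use that on step $n+1$ the pair exits the automaton: $h_{n+1}=\varphi_{x_1\cdots x_{n+1}}^{-1}\circ\varphi_{y_1\cdots y_{n+1}}$ is no longer a neighbor map, which geometrically means the cylinders $\varphi_{x_1\cdots x_{n+1}}(E)$ and $\varphi_{y_1\cdots y_{n+1}}(E)$ are separated. The exit condition from a neighbor state $h_n$ forces the normalized separation $\varphi_{x_1\cdots x_n}^{-1}(\varphi_{x_1\cdots x_{n+1}}(E))$ and $\varphi_{x_1\cdots x_n}^{-1}(\varphi_{y_1\cdots y_{n+1}}(E))$ to have a definite gap $\delta>0$ at scale $1$; rescaling by $3^{-n}$ yields $|x-y|\ge \delta\cdot 3^{-n}$ for some $\delta$ bounded below by a constant determined by the finitely many exit transitions in the automaton. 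I would compute $\delta$ by inspecting the (finitely many) ways the automaton reaches $Exit$ from each of the five states $A, III, III', IV, IV'$; the minimal gap over all these transitions, together with the fact that the relevant digit geometry forces non-adjacent copies, gives the explicit constant leading to $c=6\sqrt 2$.

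\medskip

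The main obstacle I anticipate is the lower bound: one must verify that \emph{every} exit transition produces a genuinely positive gap, not merely a failure of the exact neighbor relation, and that this gap is uniform over all pairs $\bx,\by$. This requires a careful finite case analysis of the $E$-automaton's exit edges, checking that when $h_{n+1}\notin\mathcal{N}_E^\ast$ the two order-$(n+1)$ cylinders cannot be brought arbitrarily close by the lower-order digits already processed. The restriction that $x,y\in E'$ (so their codings lie in $\Omega$ and are unique) should prevent pathological boundary-identification cases where the naive gap would vanish, and I would invoke this to discharge the delicate configurations.
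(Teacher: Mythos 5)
Your proposal follows essentially the same route as the paper: the upper bound comes from the level-$n$ cylinders intersecting, so $|x-y|$ is at most the sum of their diameters $\le 2\sqrt{2}\cdot 3^{-n}$, and the lower bound comes from the level-$(n+1)$ cylinders being disjoint with a uniform positive gap. The paper obtains that gap in one stroke by noting that the convex hull of $E$ is a quadrilateral whose disjoint lattice translates at scale $3^{-(n+1)}$ are at distance $>\frac{\sqrt{2}}{4}\cdot 3^{-(n+1)}$, rather than by enumerating exit transitions (and the states $A, III, III', IV, IV'$ you cite belong to the transducer of Section 2, not to the $E$-automaton of Section 3).
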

\begin{proof}
By the definition of separation number,
we have
$S_{x_1\cdots x_{n}}(E)\cap S_{y_1\cdots y_{n}}(E)\neq\emptyset$ and
$S_{x_1\cdots x_{n+1}}(E)\cap S_{y_1\cdots y_{n+1}}(E)=\emptyset$.
Notice that  the convex hull $H$ of $E$  is a quadrilateral (see Figure \ref{fig:labels}(a)), and that ${dist}(S_{x_1\cdots x_{n+1}}(H), S_{y_1\cdots y_{n+1}}(H))>\sqrt{2}/4\cdot 3^{-(n+1)}$.
Hence
$$|x-y|\leq diam\big(S_{x_1\cdots x_{n}}(E)\big)+diam\big(S_{y_1\cdots y_{n}}(E)\big)\leq 2\sqrt{2}\times 3^{-n},$$
and
$$|x-y|\geq dist\big(S_{x_1\cdots x_{n+1}}(E),S_{y_1\cdots y_{n+1}}(E)\big)\geq\frac{\sqrt{2}}{12}\times3^{-n}.$$
 The lemma is proved.
\end{proof}

For the  fractal square $F$, we use $\mathcal{N}_F$ and  $\mathcal{N}_F^{\ast}$ to denote the neighbor map set and the state set of $F$-automaton,~respectively.
That is,
\begin{equation}\label{3.4}
\mathcal{N}_F=\big\{f1,-f1\big\}~~~{\rm and} ~~~\mathcal{N}_F^{\ast}=\mathcal{N}_F\cup\big\{id\big\}\cup\big\{Exit\big\},
\end{equation}
where $f1=-\bi$. See  Figure \ref{fig:6}.

\begin{figure}[h]
\includegraphics[width=.43 \textwidth]{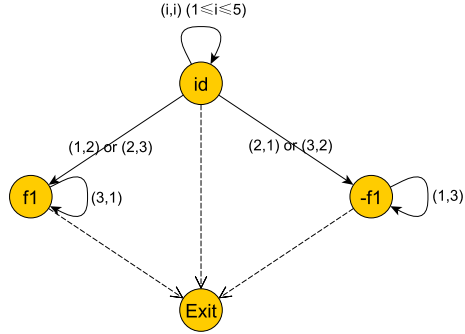}
\caption{$F$-automaton}
\label{fig:6}
\end{figure}

Similar to Lemma \ref{lemma 3.2},
we have the follow lemma.

\begin{lemma}\label{lemma 3.3}
For $u,v\in F^{'}$,
let $\bu,\bv\in \Omega$ be the coding of $u$ and $v$,~respectively.
Let $\Lambda_F(\bu,\bv)=n$.
Then
\begin{equation*}
  \frac{1}{c}3^{-n}\leq|u-v|\leq c3^{-n}
\end{equation*}
where $c=6\sqrt{2}$.
\end{lemma}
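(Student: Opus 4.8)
\textbf{Proof proposal for Lemma \ref{lemma 3.3}.}

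The plan is to mirror the proof of Lemma \ref{lemma 3.2} exactly, transporting the argument from $E$ to $F$. By the definition of the separation number $\Lambda_F(\bu,\bv)=n$, the itinerary of $\binom{\bu}{\bv}$ arrives at $Exit$ on the $(n+1)$-th step, which unwinds to the two geometric facts
\begin{equation*}
S_{u_1\cdots u_n}(F)\cap S_{v_1\cdots v_n}(F)\neq\emptyset
\qquad\text{and}\qquad
S_{u_1\cdots u_{n+1}}(F)\cap S_{v_1\cdots v_{n+1}}(F)=\emptyset.
\end{equation*}
The upper bound then follows immediately from the triangle inequality and the fact that each $n$-th level cylinder $S_{w_1\cdots w_n}(F)$ has diameter at most $\sqrt{2}\cdot 3^{-n}$, since the unit-square diameter $\sqrt 2$ is scaled by $3^{-n}$; this gives $|u-v|\leq 2\sqrt2\cdot 3^{-n}$, matching the $E$ case. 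The content of the lemma is therefore concentrated in the lower bound.

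First I would identify the convex hull $H_F$ of $F$ and verify that, whenever two level-$(n+1)$ cylinders $S_{u_1\cdots u_{n+1}}(F)$ and $S_{v_1\cdots v_{n+1}}(F)$ are \emph{disjoint} (which is exactly the $Exit$ condition), their \emph{hulls} are separated by a definite gap of the form $\mathrm{dist}\big(S_{u_1\cdots u_{n+1}}(H_F),S_{v_1\cdots v_{n+1}}(H_F)\big)>\frac{\sqrt2}{4}\cdot 3^{-(n+1)}$, just as claimed for $E$. This yields $|u-v|\geq \frac{\sqrt2}{12}\cdot 3^{-n}=\frac{1}{c}3^{-n}$ with the same constant $c=6\sqrt2$, so the two lemmas can indeed share a single numerical constant. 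The step I expect to be the main obstacle is precisely this hull-separation estimate: because $F$ has a different digit set $\mathcal D_F$ from $E$, its convex hull and the relative positions of its five subcylinders differ, so one must check by hand that every pair of disjoint neighboring cylinders of $F$ still leaves at least the stated gap. Concretely, this reduces to inspecting the finitely many neighbor configurations recorded by the $F$-automaton of Figure \ref{fig:6}: one must confirm that the only states are $\{id, f1, -f1\}$ with $f1=-\bi$, and that from any such state every outgoing edge either stays among these neighbor maps or jumps to $Exit$ with a quantifiable separation.

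The clean way to organize the obstacle is to argue that the separation gap depends only on the current automaton state and the input pair $\binom{i}{j}$, of which there are finitely many. Since each neighbor map is a translation $x\mapsto x+h$ with $h\in\{0,\pm \bi\}$, the minimal distance between two disjoint cylinders at a given level is a finite minimum of positive quantities, each a rescaled copy of the base-level gaps; taking the worst case over this finite list and rescaling by $3^{-(n+1)}$ produces the uniform bound. Once the hull-separation inequality is in place, the remainder of the proof is the same two-line sandwich as in Lemma \ref{lemma 3.2}, and the lemma follows.
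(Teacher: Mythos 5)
Your proposal is correct and follows exactly the route the paper intends: the paper gives no separate argument for Lemma \ref{lemma 3.3}, stating only that it is ``similar to Lemma \ref{lemma 3.2}'', and your transport of that proof (upper bound from cylinder diameters, lower bound from the hull-separation gap between disjoint level-$(n+1)$ cylinders, reduced to a finite check over the states of the $F$-automaton) is precisely the intended adaptation. If anything, you are more careful than the paper in flagging that the constant $\sqrt{2}/4$ must be re-verified for the digit set $\mathcal{D}_F$, which the paper leaves implicit.
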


\section{\textbf{Proof of Theorem \ref{thm-dense-subset}}}

In this section,
We shall prove Theorem \ref{thm-s-number}, from which  Theorem \ref{thm-dense-subset} will follow immediately.

\begin{thm}\label{thm-s-number}
Let $\bx,\by\in \Omega$, and
denote
$g(\bx)=\bu$ and $g(\by)=\bv$.
Then
\begin{equation}\label{eq-key}
|\Lambda_E(\bx,\by)- \Lambda_F(\bu,\bv)|\leq 3.
\end{equation}
\end{thm}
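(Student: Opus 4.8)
The goal is to control how the separation number changes under the map $g$. The plan is to compare the itinerary of $\binom{\bx}{\by}$ in the $E$-automaton with the itinerary of $\binom{\bu}{\bv}$ in the $F$-automaton, position by position, using the length relation supplied by Lemma \ref{lem-length}. The key structural fact is that $g$ acts locally: by Lemma \ref{lem-length}, the prefix $u_1\cdots u_n$ is determined by $x_1\cdots x_{n+1}$, and moreover the output length tracks the input length up to a bounded shift. Concretely, if one writes the common prefix $\bx\wedge\by=x_1\cdots x_p$, then because $g_0$ is applied segment-by-segment and $g$ preserves the segment decomposition (Lemma \ref{new-1}), the images $\bu$ and $\bv$ share a prefix whose length $q=|\bu\wedge\bv|$ differs from $p$ by a controlled amount — the discrepancy coming only from the boundary segment where $\bx$ and $\by$ first diverge, since $g_0$ changes the length of a single segment by at most one symbol ($35^k\mapsto 42^{k-1}5$ keeps length, $42^k5\mapsto 42^{k-1}55$ keeps length, $45\mapsto 35$ keeps length). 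Thus $|p-q|$ is bounded by a small constant.

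The first step is therefore to relate $\Lambda_E(\bx,\by)$ to the branching length $p=|\bx\wedge\by|$, and likewise $\Lambda_F(\bu,\bv)$ to $q=|\bu\wedge\bv|$. I would argue that the separation number and the branching length differ by a universal constant: the automaton stays out of $Exit$ for a bounded number of steps after the two codings split, because once $x_n\neq y_n$ the neighbor map $h_n$ lies in the finite set $\mathcal{N}_E$ (or goes to $Exit$), and from Figures \ref{fig:5}--\ref{fig:6} one reads off that the automaton can remain in $\mathcal{N}_E\cup\{id\}$ only along the explicitly drawn loops, which correspond precisely to the double-coding patterns $13^\infty,21^\infty$, etc. Since points of $E'$ and $F'$ have \emph{unique} codings (they end in $4^\infty$), these infinite loops cannot persist, so $\Lambda_E(\bx,\by)=p+O(1)$ and similarly $\Lambda_F(\bu,\bv)=q+O(1)$, with explicit small constants readable from the two automata.

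Combining these, $|\Lambda_E(\bx,\by)-\Lambda_F(\bu,\bv)|\leq |p-q|+O(1)$, and since $|p-q|$ is bounded by the length change of $g_0$ on the single segment straddling the divergence point, the total is bounded by the claimed constant $3$. The main obstacle will be the bookkeeping at the divergence point: when $\bx$ and $\by$ split, they may split \emph{inside} a special segment (e.g. one continues $35^k$ while the other exits as $42^{k-1}5$ under $g_0$), so I must check that the images $g_0(X_j)$ and $g_0(X_j')$ of the two differing segments still agree on a prefix of length comparable to where $X_j,X_j'$ agreed, up to the $\pm 1$ slack. I would handle this by a finite case analysis over the segment types in $\mathcal{A}_E$ and their images in $\mathcal{A}_F$, verifying in each case that the shift between where the $E$-codings separate and where the $F$-codings separate is at most the stated bound. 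The verification that the automaton exits within $O(1)$ steps of divergence is the other place requiring care, and I would carry it out by directly tracing the finitely many states of each automaton against the admissible letters following a split.
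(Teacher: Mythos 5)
There is a genuine gap, and it is fatal to the proposed reduction. Your plan rests on the claim that $\Lambda_E(\bx,\by)=|\bx\wedge\by|+O(1)$ (and likewise for $F$), justified by saying that since points of $E'$ have unique codings the loops of the automaton ``cannot persist.'' But uniqueness of the coding only forces the itinerary to leave the loop \emph{eventually}, not within a bounded number of steps of the divergence point. Concretely, take $\bx=13^{m}4^{\infty}$ and $\by=21^{m}4^{\infty}$: these lie in $\Omega$, they differ already in the first letter, so $|\bx\wedge\by|=0$, yet the itinerary is $(id)^0\to (e1)^{m+1}\to Exit$ and $\Lambda_E(\bx,\by)=m+1$, which is unbounded. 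The same phenomenon occurs for the $\pm e2$ loop with $\binom{35^{m}\cdots}{42^{m}\cdots}$. So the separation number is \emph{not} controlled by the branching length, and the inequality $|\Lambda_E-\Lambda_F|\leq |p-q|+O(1)$ you derive from it does not hold as stated. The part of your argument comparing $|\bx\wedge\by|$ with $|\bu\wedge\bv|$ via the segment decomposition is sound and corresponds to the paper's Lemma \ref{case-id} together with the case analysis when the first nontrivial state is $id$, but it covers only one of the cases.

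The missing idea is precisely the treatment of the loop states, which is where the real content of the theorem lies. The paper splits the proof according to the first nontrivial state $h$ of the $E$-itinerary ($Exit$, $id$, $\pm e1$, $\pm e2$). When $h=\pm e1$, the input pair has the form $\binom{x_13^{n-1}\cdots}{y_11^{n-1}\cdots}$; since these letters are (almost all) plain segments, $g$ essentially reproduces the same pattern, and the $F$-itinerary spends about $n$ steps in the $\pm f1$ loop. When $h=\pm e2$, the pair $\binom{35^{n-1}\cdots}{42^{n-1}\cdots}$ is mapped to two sequences that actually \emph{agree} on a prefix of length about $n-1$ (both begin $42^{n-2}\cdots$), so the ``time spent adjacent'' in $E$ is converted into ``time spent identical'' in $F$. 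Thus the quantity that is preserved up to an additive constant is the separation number itself, realized through different states in the two automata, and this transfer between loop-time and common-prefix-length is exactly what your branching-length reduction cannot see. To repair your proof you would need to add the two loop cases as separate arguments, at which point you would essentially be reproducing the paper's Lemmas \ref{lem-key-3} and \ref{lem-key-4}.
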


In the section,
if no otherwise specified,
we always denote
$\bu=g(\bx)~{\rm and}~ \bv=g(\by)$,
and let $(X_j)_{j=1}^{\infty}$, $(Y_j)_{j=1}^{\infty}$
be the $E$-segment decompositions of $\bx, \by$ respectively; let
$(U_j)_{j=1}^{\infty}$, $(V_j)_{j=1}^{\infty}$
be the $F$-segment decompositions of $\bu, \bv$ respectively.

We shall use the following simple fact repeatedly: For $\bx=(x_i)_{i\geq 1}\in \Omega$,
if $x_1=3$, then $X_1=3$ or $35^{k}(k\geq 1)$; if $x_1=4$, then  $X_1=4$, $45$, or $X_1=42^{k}5(k\geq1)$.

\begin{lemma}\label{case-id} For $\bx,\by\in\Omega$,
if $x_1=y_1$ and $X_1\neq Y_1$,
then
\begin{equation}\label{state-id}
 \Lambda_E(\bx,\by)=|\bx\wedge \by| \text{ or } |\bx\wedge \by|+1.
 \end{equation}
 Similarly, if $u_1= v_1$ and $U_1\neq V_1$, then
 \begin{equation}\label{state-id-2}
 \Lambda_F(\bu,\bv)=|\bu\wedge \bv| \text{ or } |\bu\wedge \bv|+1.
 \end{equation}
\end{lemma}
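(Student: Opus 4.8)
\textbf{Proof strategy for Lemma \ref{case-id}.}
The plan is to analyze how the $E$-automaton processes the pair $\binom{\bx}{\by}$ in the special situation where the two codings agree on their first letter but split at the level of segments (i.e. $x_1=y_1$ but $X_1\neq Y_1$), and to show that this forces the automaton to exit within one step of the place where $\bx$ and $\by$ first differ as symbol sequences. First I would set $k=|\bx\wedge\by|$, so that $x_1\cdots x_k=y_1\cdots y_k$ and $x_{k+1}\neq y_{k+1}$. By definition of the separation number and the structure of the automaton, after reading the common prefix of length $k$ the pair is at the identity state $id$ (since $\varphi_{x_1\cdots x_k}^{-1}\circ\varphi_{y_1\cdots y_k}=id$). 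Thus $\Lambda_E(\bx,\by)\geq k$, and the entire question reduces to determining on which step after $k$ the automaton reaches $Exit$: either on step $k+1$ (giving $\Lambda_E=k$) or on step $k+2$ (giving $\Lambda_E=k+1$). To rule out $\Lambda_E(\bx,\by)\geq k+2$, I would feed the differing letters $x_{k+1}\neq y_{k+1}$ into the automaton from state $id$ and consult Figure \ref{fig:5}: the key point is that from $id$, almost every pair $\binom{i}{j}$ with $i\neq j$ leads either directly to $Exit$ or to a neighbor state in $\mathcal N_E=\{e1,-e1,e2,-e2\}$ from which, after at most one further step, one necessarily exits.

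The heart of the argument, and the place I expect the real work to be, is justifying that from any neighbor state reached at step $k+1$ the pair cannot survive past step $k+2$. This requires using the hypothesis $X_1\neq Y_1$ in an essential way, not merely $x_{k+1}\neq y_{k+1}$: the assumption that the splitting occurs at a segment boundary constrains which letters can follow the point of divergence. Concretely, I would enumerate the transitions out of each of the four neighbor states $e1,-e1,e2,-e2$ in the $E$-automaton and verify, case by case, that the only way to stay inside $\mathcal N_E\cup\{id\}$ for two consecutive steps would require the continuations of $\bx$ and $\by$ past their common prefix to be of a form incompatible with $X_1\neq Y_1$ — for instance, that staying alive would force both tails to continue with the repeated letter $5$ in a manner that would make $X_1$ and $Y_1$ coincide as segments, contradicting the hypothesis. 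Here I would lean on the simple fact highlighted just before the lemma, namely that if $x_1=3$ then $X_1\in\{3\}\cup\{35^k:k\geq 1\}$ and if $x_1=4$ then $X_1\in\{4,45\}\cup\{42^k5:k\geq 1\}$, to limit the admissible symbol continuations and hence the admissible automaton paths.

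Once this finite case analysis establishes that $\Lambda_E(\bx,\by)\in\{k,k+1\}$, the first assertion \eqref{state-id} follows directly. For the second assertion \eqref{state-id-2} concerning the $F$-automaton, I would run the identical argument with the $F$-automaton of Figure \ref{fig:6} in place of the $E$-automaton, the neighbor set $\mathcal N_F=\{f1,-f1\}$ in place of $\mathcal N_E$, and the $F$-segment structure (with the admissible first segments read off from $\mathcal A_F$ in \eqref{AF}) in place of the $E$-segment structure. Because $\mathcal N_F$ has only two neighbor states rather than four, this case analysis should if anything be shorter, though I would double-check that the analogous ``cannot survive two steps'' claim still holds; the same principle — that surviving an extra step forces the tails to agree in a way that contradicts $U_1\neq V_1$ — should apply. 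The main obstacle throughout is organizing the transition enumeration cleanly enough that the role of the segment-boundary hypothesis is visible, rather than drowning in the individual edges of the automaton.
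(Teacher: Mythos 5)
Your strategy coincides with the paper's own proof: both note that the pair stays at the state $id$ while reading the common prefix, so $\Lambda_E(\bx,\by)\geq|\bx\wedge\by|$, and then use the admissible forms of special segments to show the automaton must exit within at most two further steps --- the paper organizes this by observing that (WLOG $X_1$ special) the first divergent letter is either a non-initial letter of $X_1$, hence in $\{2,5\}$, so from $id$ one either exits immediately or moves to $\pm e1$ via the letter $2$, after which the next letter of $X_1$ again lies in $\{2,5\}$ and forces $Exit$; or else $X_1$ lies inside the common prefix, which forces the divergent letter of $\by$ to be $5$ and gives immediate exit. The finite case analysis you defer is precisely the content of the paper's argument, and your outline of it (including the essential use of $X_1\neq Y_1$ and the segment forms) is correct.
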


\begin{proof}The  assumptions of the lemma imply that at least one of
 $X_1$ and $Y_1$  is a special segment.
Without loss of generality,
we assume that  $X_1$ is a special segment.
 Let $k$ be the first index such that $x_k\neq y_k$.

 If $k> |X_1|$, then $X_1$ is a prefix of $\by$, which forces
 $X_1=35^{k-2}$ and $Y_1=35^\ell$ with $\ell\geq k-1$. It follows that $y_k=5$, and hence
 the edge $(id,(x_k,y_k))$ leads to $Exit$, so
\begin{equation*}
 \Lambda_E(\bx,\by)=|\bx\wedge \by|.
\end{equation*}

 If $k\leq |X_1|$, then $x_k\in \{2,5\}$ as a non-initial letter of a special segment.
 So $(id,(x_k,y_k))$ either leads to $Exit$ or leads to $\pm e1$.
 If the later case happens, then
 $$(x_k,y_k)\in \{(2,1),(2,3),(1,2),(3,2)\}.$$
 So we must have $x_k=2$. Then, as a non-initial letter of $X_1$,  $x_{k+1}\not \in \{1,3\}$,
 so the $(k+1)$-th step leads to $Exit$, which means
\begin{equation*}
 \Lambda_E(\bx,\by)=k=|\bx\wedge \by|+1
\end{equation*}

The second assertion of the lemma can be proved in the same manner (and it is simpler).
 \end{proof}

Apparently, to prove Theorem \ref{thm-s-number}, we need only prove the theorem for the case that $X_1\neq Y_1$.
We will do this in four lemmas according to $h=Exit, id, \pm e1$ or $\pm e2$, where $h$ is the first state of the
itinerary of $(\bx,\by)$.

\begin{lemma}\label{lem-key-1}
Equation \eqref{eq-key} holds
if $h=Exit$ and $X_1\neq Y_1$.
\end{lemma}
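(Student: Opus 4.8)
The plan is to turn \eqref{eq-key} into a finite inspection of the leading letters of the codings. Since $h$ is the first state of the itinerary and $h=Exit$, the pair $\binom{\bx}{\by}$ reaches $Exit$ already on the first step, so $\Lambda_E(\bx,\by)=0$; in particular $x_1\neq y_1$, since $x_1=y_1$ would give first state $id$. Hence \eqref{eq-key} reduces to the single inequality $\Lambda_F(\bu,\bv)\leq 3$, and I only need to track how the $F$-automaton behaves on the first one or two inputs $\binom{u_n}{v_n}$.

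First I would list the pairs $(x_1,y_1)$ that give $h=Exit$. As the first state of the $E$-automaton is the translation $d_{y_1}-d_{x_1}$, the pair exits exactly when $d_{y_1}-d_{x_1}\notin\{\pm 1,\pm\bi\}$; reading these off from $\mathcal{D}_E$, and using the symmetry $\Lambda_E(\bx,\by)=\Lambda_E(\by,\bx)$ together with the fact that swapping $\bx,\by$ swaps $\bu,\bv$, it suffices to treat the unordered pairs $\{1,3\},\{1,4\},\{1,5\},\{2,4\},\{2,5\},\{3,5\},\{4,5\}$. For each I would read off the possible leading letters $u_1,v_1$ from $g_0$: the rule is that $u_1=x_1$ unless $X_1$ is special, and a short table shows $u_1=1,2,5$ forces $x_1=1,2,5$, while $u_1=3$ forces $X_1\in\{3,45\}$ and $u_1=4$ forces $X_1\in\{35^k,4,42^k5\}$.

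The structural input is the simplicity of the $F$-automaton: out of $id$ the only non-$Exit$ transitions are $(i,i)\to id$ and the four pairs $(1,2),(2,3),(2,1),(3,2)\to\pm f1$, and out of $\pm f1$ the only non-$Exit$ transition is the self-loop $(3,1)\to f1$ (respectively $(1,3)\to -f1$). I would first rule out $u_1=v_1$: with $x_1\neq y_1$ this would need both leading segments to map to the letter $3$, or both to $4$, which inspection of the table forces to $\{x_1,y_1\}=\{3,4\}$ --- but that pair gives first state $\pm e2\neq Exit$, against the hypothesis. So the $F$-automaton leaves $id$ on the first step, landing either at $Exit$ (whence $\Lambda_F=0$) or at $\pm f1$.

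Finally I would run through the seven pairs. In every case the image pair $(u_1,v_1)$ sends the $F$-automaton to $Exit$ on step one, with exactly one exception: the pair $\{2,4\}$ with special segment $45$ on the side of $4$, where $g_0(45)=35$ and $g_0(2)=2$ give $(u_1,v_1)=(2,3)$ (or $(3,2)$), landing at $f1$ (resp. $-f1$). Here I would use the second letter: because $V_1=35$ we have $v_2=5$, so the step-two input $\binom{u_2}{5}$ is not the self-loop $\binom{3}{1}$ and the automaton exits, giving $\Lambda_F=1$. Thus $\Lambda_F\leq 1$ in all cases and $|\Lambda_E-\Lambda_F|=\Lambda_F\leq 1\leq 3$. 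The one genuinely delicate point --- and the step I would be most careful about --- is this bookkeeping: because $g_0$ may alter the leading letter of a segment, disjointness of the level-one cylinders of $E$ does not pass verbatim to $F$, and one must isolate precisely the configuration $45\mapsto 35$ in which the images briefly touch and then separate them with the next symbol.
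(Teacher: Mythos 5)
Your proof is correct and follows essentially the same route as the paper: both reduce to showing $\Lambda_F(\bu,\bv)\leq 1$ after observing that $h=Exit$ gives $\Lambda_E(\bx,\by)=0$ and that $u_1=v_1$ would force $\{x_1,y_1\}=\{3,4\}$ and hence $h=\pm e2$. The only difference is execution: the paper assumes $\Lambda_F\geq 2$ and derives $\binom{u_1u_2}{v_1v_2}=\binom{13}{21}$ or $\binom{23}{31}$ for a contradiction, whereas you enumerate the seven exit pairs forward and isolate the single configuration $(2,45)\mapsto(2,35)$ where the $F$-automaton survives one step.
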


\begin{proof} We note that $h=Exit$ means that $\Lambda_E(\bx,\by)=0$, so $x_1\neq y_1$.
We assert that $u_1\neq v_1$. For otherwise, we must have $(x_1,y_1)=(3,4)$  or $(4,3)$, which implies
that $h=\pm e2$, a contradiction.
Assume that $u_1< v_1$ without loss of generality.

If $\Lambda_F(\bu,\bv)\ge 2$, then
$\displaystyle \binom{u_1u_2}{v_1v_2}=\binom{13}{21} \text{ or }\binom{23}{31}$ since $u_1\neq v_1$.
It follows that $u_1$ and $v_1$ are $F$-plain segments and hence
$$
(x_1,y_1)=(u_1,v_1)=(1,2) \text{ or } (2,3),$$
 which contradicts $\Lambda_E(\bx,\by)=0$. Hence $\Lambda_F(\bu,\bv)=0$ or $1$. The lemma is proved.
\end{proof}

\begin{lemma}\label{lem-key-2}
Equation \eqref{eq-key} holds
if $h=id$ and $X_1\neq Y_1$.
\end{lemma}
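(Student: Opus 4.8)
The plan is to prove Lemma \ref{lem-key-2}, which handles the case $h=id$ and $X_1\neq Y_1$. Since $h=id$ is the first state of the itinerary, we have $x_1=y_1$, and by Lemma \ref{case-id} applied to $(\bx,\by)$ we already know that $\Lambda_E(\bx,\by)=|\bx\wedge\by|$ or $|\bx\wedge\by|+1$. The strategy is to compare this directly against $\Lambda_F(\bu,\bv)$, for which I will want a similarly tight estimate in terms of $|\bu\wedge\bv|$, and then to relate $|\bu\wedge\bv|$ to $|\bx\wedge\by|$ through the local structure of the map $g_0$. The key bridge is Lemma \ref{lem-length}: the prefix $u_1\cdots u_n$ of $\bu$ is determined by $x_1\cdots x_{n+1}$, so if $\bx$ and $\by$ agree on a long prefix, so do $\bu$ and $\bv$, with only a bounded-length discrepancy coming from the single extra input letter.

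First I would pin down the segment decompositions near the point of disagreement. Since $x_1=y_1$ but $X_1\neq Y_1$, at least one of $X_1,Y_1$ is a special segment sharing the same first letter; using the simple fact quoted before Lemma \ref{case-id}, the relevant possibilities are that both first segments start with $3$ (so one is $3$ and the other $35^k$, or two different $35^k$'s) or both start with $4$ (giving $4$, $45$, or $42^k5$). In each of these cases the common prefix $\bx\wedge\by$ ends exactly where the two special segments first diverge, and the letters at that position are constrained (a non-initial letter of a $3$-special segment is $5$, of a $4$-special segment is $2$ or $5$). I would then apply $g_0$ segment-by-segment: because $g_0$ preserves the common initial segments $g_0(X_j)=g_0(Y_j)$ for all $j$ before the first point of divergence, the common prefix $\bu\wedge\bv$ is $\prod_{j< j_0} g_0(X_j)$ together with the maximal common prefix of $g_0(X_{j_0})$ and $g_0(Y_{j_0})$. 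Since $g_0$ changes the length of a segment by at most a bounded additive amount near its endpoints (the exponent $k$ is preserved, only the shape $35^k\leftrightarrow 42^{k-1}5$ changes), I get $\big||\bu\wedge\bv|-|\bx\wedge\by|\big|\leq C$ for an explicit small constant.

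Having controlled $|\bu\wedge\bv|$ in terms of $|\bx\wedge\by|$, I would check whether the first state of the itinerary of $(\bu,\bv)$ is again $id$, or whether $g$ can turn the $h=id$ situation upstairs into a different opening state downstairs. If $u_1=v_1$ and $U_1\neq V_1$, then the second half of Lemma \ref{case-id} gives $\Lambda_F(\bu,\bv)=|\bu\wedge\bv|$ or $|\bu\wedge\bv|+1$, and combining with the two-sided estimates above yields $|\Lambda_E(\bx,\by)-\Lambda_F(\bu,\bv)|\leq 3$ directly. The remaining possibility, that $U_1=V_1$ (the images still share a full first segment even though $X_1\neq Y_1$), must be examined by hand: I would verify that $g_0$ is injective on segments, so $g_0(X_1)=g_0(Y_1)$ forces $X_1=Y_1$, a contradiction; hence $U_1\neq V_1$ always holds and Lemma \ref{case-id} applies.

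The main obstacle I anticipate is the bookkeeping of the boundary between segments under $g_0$: the map lengthens $42^k5$ to $42^{k-1}55$ and sends $35^k$ to $42^{k-1}5$, so the position at which $\bu$ and $\bv$ first differ is shifted relative to the position at which $\bx$ and $\by$ differ, and one must be careful that this shift is genuinely bounded and does not accumulate across the agreeing prefix. The reason it stays bounded is that $g_0$ acts length-preservingly in bulk (by Lemma \ref{lem-length} each input letter produces one output letter except at the state $A$, where a $1$ resynchronizes the transducer), so the only slippage occurs inside the single divergent segment $X_{j_0}$ versus $Y_{j_0}$, whose $g_0$-images differ in length from their preimages by at most one or two letters. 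Making this precise—ideally by tracking the transducer through the last common $1$ before the divergence, as anticipated in Remark \ref{g-finite}—is where the care is needed, but it reduces to a finite case check over the listed segment types.
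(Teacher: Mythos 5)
Your overall strategy is sound and close in spirit to the paper's (both reduce to comparing $|\bx\wedge\by|$ with $|\bu\wedge\bv|$ and invoking Lemma \ref{case-id}), but there is a genuine gap in your case split. You dichotomize into ``$u_1=v_1$ and $U_1\neq V_1$'' versus ``$U_1=V_1$'', and dispose of the latter by injectivity of $g_0$. This is not exhaustive: the case $u_1\neq v_1$ occurs and is not covered. Indeed $g_0$ changes the \emph{first} letter of some segments: $45\mapsto 35$ while $4\mapsto 4$ and $42^{\ell}5\mapsto 42^{\ell-1}55$, so if $X_1=45$ and $Y_1\in\{4,\,42^{\ell}5\}$ then $(u_1,v_1)=(3,4)$; likewise $35^{k}\mapsto 42^{k-1}5$ while $3\mapsto 3$, so $X_1=35^{k}$, $Y_1=3$ gives $(u_1,v_1)=(4,3)$. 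In these situations the opening state of the itinerary of $(\bu,\bv)$ is not $id$, the second half of Lemma \ref{case-id} does not apply, and a bound on $|\bu\wedge\bv|$ alone does not bound $\Lambda_F(\bu,\bv)$ from above: a priori the pair could enter a neighbor state of the $F$-automaton and stay there for a long time.

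What rescues these cases is a direct inspection of the $F$-automaton: since $\mathcal{N}_F=\{\pm f1\}$ with $f1=-\bi$ (only vertical neighbors), the input pairs $(3,4)$ and $(4,3)$ lead immediately to $Exit$, so $\Lambda_F(\bu,\bv)=0$, while one checks $\Lambda_E(\bx,\by)=1$ in each of these configurations. This is exactly the content of the paper's Case 1 ($X_1=45$) and the first subcase of its Case 2 ($X_1=35^k$, $Y_1=3$). The rest of your argument --- the bounded slippage of $|\bu\wedge\bv|$ against $|\bx\wedge\by|$ inside the single divergent segment, and the application of Lemma \ref{case-id} when $u_1=v_1$ --- matches the paper's Cases 2 and 3 and is correct; you only need to add the missing branch and its automaton check to complete the proof.
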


\begin{proof} $h=id$ implies that $x_1=y_1$, so
  at least one of
 $X_1$ and $Y_1$  is a special segment.
We may assume that  $X_1$ is a special segment, then
 $X_1=45,35^{k}(k\geq 1) \text{ or }42^{k}5(k\geq1).$

\medskip
\textit{Case 1.}
$X_1=45$.

In this case,
we have  $y_1=x_1=4$, and $y_2\neq 5$ since $X_1\neq Y_1$.
Clearly $\Lambda_E(\bx,\by)=1$.
Now we consider the $F$-automaton. First,
$U_1=g_0(45)=35$.
Since $Y_1=4$ or $Y_1=42^{\ell}5(\ell\geq1)$,
we have that $V_1=g_0(Y_1)=4$ or $42^{\ell-1}55$.
So $(u_1,v_1)=(3,4)$,
which imply that
the itinerary of  $(\bu, \bv)$
is $ (id)^0\to Exit $.
Hence  $\Lambda_F(\bu,\bv)=0$, and \eqref{eq-key} holds in this case.
(By the same argument,   \eqref{eq-key} holds if $Y_1=45$.)

\medskip
\textit{Case 2.}
$X_1=35^{k}(k\geq 1)$.

In this case,
we have $U_1=42^{k-1}5$ and $y_1=3$.
Hence either $Y_1=3$ and $y_2\neq 5$ (if it is a plain segment) or
$Y_1=35^{\ell}(\ell\geq1,\ell\neq k)$ (if it is a special segment).

If $Y_1=3$, clearly $\Lambda_E(\bx,\by)=1$  and $\Lambda_F(\bu,\bv)=0$ since $(u_1,v_1)=(4,3)$.
%So \eqref{eq-key} holds in this case.

If $Y_1=35^{\ell}(\ell\geq1,\ell\neq k)$, then $U_1=42^{k-1}5$ and $V_1=42^{\ell-1}5$, so
\begin{equation}\label{eq-ccc}
  |\bx\wedge \by|=\min\{k,\ell\}+1, \quad  |\bu \wedge \bv|=\min\{k,\ell\}\geq 1.
\end{equation}
Hence, \eqref{eq-ccc} together with  Lemma \ref{case-id} imply \eqref{eq-key}.

\medskip

\textit{Case 3.} $X_1=42^{k}5$.

In this case, $U_1=42^{k-1}55$ and  $y_1=4$.
Hence $Y_1=4,45$ or $42^{\ell}5(\ell\geq1,\ell\neq k)$.
The case $Y_1=45$ is proved in  Case 1.

If $Y_1=42^{\ell}5$,  then $U_1=42^{k-1}55$ and $V_1=42^{\ell-1}55$, so
 $$
 |\bx\wedge \by|=\min\{k,\ell\}+1, \quad  |\bu\wedge \bv|=\min\{k,\ell\}\geq 1.
 $$
Hence, by Lemma \ref{case-id}, we have \eqref{eq-key}.

If $Y_1=4$, let $\ell\geq 0$ be the integer such that
\begin{equation*}
y_1\cdots y_{\ell+1}=42^{\ell}(\ell\geq0) \text{ and } y_{\ell+2}\neq 2.
\end{equation*}
Then $y_{\ell+2}\neq 5$ since $Y_1$ is a plain segment, and consequently  $v_{\ell+2}\not \in\{2,5\}$.
Moreover,
$v_1\cdots v_{\ell+1}=42^{\ell}$ since $y_j, 1\leq j\leq \ell+1$ are all plain segments.
Clearly
\begin{equation}\label{eq-aaa}
  |\bx\wedge \by|=\min\{k,\ell\}+1.
\end{equation}
On the other hand, since $(\bu, \bv)=({42^{k-1}55\cdots}, ~{42^{\ell}v_{\ell+2}\cdots})$, we have
\begin{equation}\label{eq-bbb}
|\bu\wedge \bv|=\min\{k-1,\ell\}+1.
\end{equation}
So \eqref{eq-aaa} and \eqref{eq-bbb} together with  Lemma \ref{case-id} imply
 (4.1) .
The proposition is proved.
\end{proof}

\begin{lemma}\label{lem-key-3} Equation \eqref{eq-key} holds
if $h=\pm e1$ and $X_1\neq Y_1$.
\end{lemma}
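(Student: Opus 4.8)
The plan is to exploit the parallel transition structure of the two automata: at the relevant states each has a \emph{unique} surviving edge, so each separation number is simply the length of a ``$(3,1)$-run,'' and the lemma reduces to measuring how much the intertwiner $g$ can lengthen or shorten that run. First I would use $\Lambda_E(\bx,\by)=\Lambda_E(\by,\bx)$ and $\Lambda_F(\bu,\bv)=\Lambda_F(\bv,\bu)$ to assume $h=e1$, so that $(x_1,y_1)\in\{(1,2),(2,3)\}$. Feeding $\binom{i}{j}$ to the $E$-automaton at a state equal to translation by $t$ produces translation by $d_j-d_i+3t$; for $t=e1=1$ the only value lying in $\mathcal{N}_E\cup\{id\}$ is $d_1-d_3+3=1$, so the only non-$Exit$ edge out of $e1$ is $(3,1)$, which returns to $e1$ (Figure \ref{fig:5}). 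Hence, with $p=\Lambda_E(\bx,\by)$, the codings must have the shape
\begin{equation*}
\bx=x_1\,3^{\,p-1}x_{p+1}\cdots,\qquad \by=y_1\,1^{\,p-1}y_{p+1}\cdots,\qquad (x_{p+1},y_{p+1})\neq(3,1).
\end{equation*}

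Next I would push these prefixes through $g$ via the $E$-segment decomposition. Since the letter $1$ is always plain and $y_1\in\{2,3\}$ is plain as soon as $p\ge2$, the block $y_1 1^{p-1}$ is a concatenation of plain segments, $g_0$ fixes it, and $v_1\cdots v_p=y_1 1^{p-1}$; in particular $v_k=1$ for $2\le k\le p$. The block $x_1 3^{p-1}$ is fixed in the same way \emph{unless} its final $3$ (at position $p$) is swallowed by a special segment $35^k$, which happens precisely when $x_{p+1}=5$; then $g_0(35^k)=42^{k-1}5$ changes $u_p$ from $3$ into $4$. Thus $u_1\cdots u_p=x_1 3^{p-1}$ when $x_{p+1}\neq5$, whereas $u_1\cdots u_{p-1}=x_1 3^{p-2}$ and $u_p=4$ when $x_{p+1}=5$.

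Finally I would run the $F$-automaton on $(\bu,\bv)$. It has the matching structure: from $id$ both $(1,2)$ and $(2,3)$ lead to $f1$, and from $f1$ the unique non-$Exit$ edge is again $(3,1)$, because $d_1-d_3+3f1=2i-3i=-i=f1$ (Figure \ref{fig:6}). Hence the itinerary of $(\bu,\bv)$ reaches $f1$ at step $1$ and stays there exactly while it is fed $(3,1)$. Combining this with the previous paragraph, the exit occurs at step $p+1$, so $\Lambda_F=p$, in the generic case $x_{p+1}\neq5$; it occurs one step earlier, $\Lambda_F=p-1$, when $x_{p+1}=5$ forces $(u_p,v_p)=(4,1)$; and it is delayed by one, $\Lambda_F=p+1$, only in the single exceptional configuration $X_{p+1}=45$ (so that $g_0(45)=35$ manufactures a spurious $(3,1)$ at step $p+1$) together with $y_{p+1}=1$, after which the output letter $u_{p+2}=5$ forces an immediate exit. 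In all cases $|\Lambda_E-\Lambda_F|\le1$, which is stronger than \eqref{eq-key}.

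The main obstacle is the boundary bookkeeping at positions $p$ and $p+1$. One must verify, across the finitely many segment types of $\bx$ and $\by$ that can straddle the exit site, both that $g$ never prolongs the $(3,1)$-run by more than one step (the $45\mapsto35$ phenomenon is the only source of lengthening, and the trailing $5$ kills any further continuation) and that it never shortens it by more than one (only the absorption of the final $3$ into $35^k$ does so). One must also treat separately the degenerate case $p=1$, where the run of $3$'s is empty and $y_1=3$ may itself open a segment $35^k$, so that $v_1=4$ and the $F$-itinerary exits at once; this again moves $\Lambda_F$ by at most one. Each of these is a routine check once the transition rule $t\mapsto d_j-d_i+3t$ and the action of $g_0$ on the six relevant segment types are written down.
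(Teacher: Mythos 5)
Your proposal is correct and follows essentially the same route as the paper: reduce to $h=e1$, read off the forced prefixes $\binom{x_1 3^{n-1}}{y_1 1^{n-1}}$ from the unique surviving edge $(3,1)$ of the $E$-automaton, transport them through $g$ via the segment decomposition, and track the matching $(3,1)$-loop at $f1$ in the $F$-automaton. You organize the endgame as an exact three-case computation of $\Lambda_F$ (identifying $45\mapsto 35$ as the only lengthening and the absorption of the final $3$ into $35^k$ as the only shortening), whereas the paper proves the two inequalities $\Lambda_F\geq n-1$ and $\Lambda_F\leq n+1$ directly, but both yield the same sharp bound $|\Lambda_E-\Lambda_F|\leq 1$, which implies \eqref{eq-key}.
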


\begin{proof}  We denote $n=\Lambda_E(\bx,\by)$.  By symmetry, we may assume that $h=e1$.
On the $E$-automaton (Figure \ref{fig:5}),
we observe that the itinerary of $(\bx,\by)$ must be
  $(id)^{0}\to (e1)^{n}\to Exit.$ So
\begin{equation*}
\displaystyle\binom{x_1\cdots x_n}{y_1\cdots y_n}=\binom{13^{n-1}}{21^{n-1}}
~~\text{ or}~~
\displaystyle\binom{23^{n-1}}{31^{n-1}}
~~\text{ and}~~
\displaystyle\binom{x_{n+1}}{y_{n+1}}\neq\binom{3}{1}.
\end{equation*}

If $n\geq 2$,
then
$x_1,\cdots,x_{n-1}$ and  $y_1,\cdots,y_{n}$ are all plain segments,
hence we have
\begin{equation*}
\binom{u_1\cdots u_n}{v_1\cdots v_n}=\binom{x_13^{n-2}3}{y_11^{n-2}1}
~~{\rm or}~~
\binom{x_13^{n-2}4}{y_11^{n-2}1},
\end{equation*}
according to $x_n$ is a plain segment or not.
It follows that
$\Lambda_F(\bu,\bv)\geq n-1$.

If $n=1$,
then
$\Lambda_F(\bu,\bv)\geq0=n-1$.
Thus for all $n\geq 1$,
\begin{equation*}
\Lambda_F(\bu,\bv)\geq n-1
\end{equation*}
always holds.

Suppose $\Lambda_F(\bu,\bv)\geq n+2$, then
\begin{equation*}
\binom{u_1\cdots u_{n+2}}{v_1\cdots v_{n+2}}=\binom{u_13^{n}3}{v_11^{n}1},
\end{equation*}
which implies that
\begin{equation*}
\binom{x_1\cdots x_{n+1}}{y_1\cdots y_{n+1}}=\binom{u_13^{n-1}3}{v_11^{n-1}1},
\end{equation*}
a contradiction. So $\Lambda_F(\bu,\bv)\leq  n+1$ and
 \eqref{eq-key}
holds in this case.
\end{proof}

\begin{lemma}\label{lem-key-4} Equation \eqref{eq-key} holds
if $h=\pm e2$ and $X_1\neq Y_1$.
\end{lemma}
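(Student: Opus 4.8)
The plan is to reduce to $h=e2$, read off the exact form of $\bx$ and $\by$ from the $E$-automaton, and then track the images $\bu=g(\bx)$, $\bv=g(\by)$ through $g_0$ and the $F$-automaton. First I would invoke the symmetry $\Lambda_E(\bx,\by)=\Lambda_E(\by,\bx)$ and $\Lambda_F(\bu,\bv)=\Lambda_F(\bv,\bu)$: interchanging $\bx\leftrightarrow\by$ sends the first state $-e2$ to $e2$ and leaves \eqref{eq-key} unchanged, so I may assume $h=e2$, i.e. $(x_1,y_1)=(3,4)$. Reading the $E$-automaton (Figure \ref{fig:5}), the state $e2$ has a single non-$Exit$ outgoing edge, namely the self-loop labelled $\binom{5}{2}$, and every other input leads to $Exit$. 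Hence, writing $n=\Lambda_E(\bx,\by)$, the itinerary is forced to be $(id)^0\to(e2)^n\to Exit$, giving
\begin{equation*}
\binom{x_1\cdots x_n}{y_1\cdots y_n}=\binom{3\,5^{n-1}}{4\,2^{n-1}},\qquad \binom{x_{n+1}}{y_{n+1}}\neq\binom{5}{2}.
\end{equation*}

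Next I would compute the leading parts of $\bu$ and $\bv$ from $g_0$. For $\bx$, the segment $X_1$ equals $35^{n-1}$ when $x_{n+1}\neq 5$ (so $U_1=42^{n-2}5$, of length $n$), or $35^{k}$ with $k\geq n$ when $x_{n+1}=5$ (so $U_1=42^{k-1}5$); the degenerate case $n=1$, $x_2\neq 5$ gives the plain segment $X_1=3$, $U_1=3$. For $\by$, the block $42^{n-1}$ either lies inside a special segment $42^{\ell}5$ (with image beginning $42^{\ell-1}55$, or $35$ when that segment is $45$), or consists of plain segments, in which case $\bv$ begins with $42^{n-1}$. The constraint $\binom{x_{n+1}}{y_{n+1}}\neq\binom{5}{2}$ couples these: as soon as $x_{n+1}=5$, the run of $2$'s in $\by$ has length exactly $n-1$. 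Enumerating the resulting combinations, one reads off that $|\bu\wedge\bv|$ always lies in $\{n-1,n\}$, and then locates $\Lambda_F(\bu,\bv)$ either by Lemma \ref{case-id} (when $u_1=v_1$) or by direct inspection of the first differing pair on the $F$-automaton (when $u_1\neq v_1$).

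The only delicate point is that after the common prefix the $F$-automaton does not always pass directly from $id$ to $Exit$: for certain first-differing pairs (for instance $\binom{2}{1}$ or $\binom{2}{3}$) it enters a neighbour state of $\mathcal{N}_F=\{f1,-f1\}$ rather than exiting. Here I would use the fact, read from Figure \ref{fig:6}, that each neighbour state of the $F$-automaton has exactly one self-loop, labelled by a pair whose top entry is $1$ or $3$, and exits on every other input. In precisely these situations $\bu$ carries the image $42^{k-1}5$ of a block $35^{k}$, so its next letter is $2$ or $5$, never $1$ or $3$; the automaton is therefore forced to $Exit$ on the very next step, whence $\Lambda_F(\bu,\bv)\leq|\bu\wedge\bv|+1$. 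Assembling the cases yields the uniform estimate $|\Lambda_E(\bx,\by)-\Lambda_F(\bu,\bv)|\leq 1$, which is stronger than \eqref{eq-key}. I expect the main obstacle to be organizational rather than analytic: correctly enumerating the segment-type combinations for $X_1$ and $Y_1$ (including the small-$n$ degeneracies and the plain-versus-special dichotomy for the leading $4$ of $\by$) and verifying the short $F$-itinerary in each, rather than any single hard estimate.
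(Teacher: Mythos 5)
Your proposal is correct, and its skeleton coincides with the paper's: reduce to $h=e2$ by the symmetry $\Lambda_E(\bx,\by)=\Lambda_E(\by,\bx)$, read off the forced itinerary $(id)^0\to(e2)^n\to Exit$ to get $\binom{x_1\cdots x_n}{y_1\cdots y_n}=\binom{35^{n-1}}{42^{n-1}}$ with $\binom{x_{n+1}}{y_{n+1}}\neq\binom{5}{2}$, and then control $\Lambda_F(\bu,\bv)$ via Lemma \ref{case-id}. Where you diverge is in how the two bounds are extracted. The paper gets the lower bound $\Lambda_F\geq n-1$ cheaply from Lemma \ref{lem-length} (the transducer's one-step delay forces $u_1\cdots u_{n-1}=v_1\cdots v_{n-1}=42^{n-2}$), and gets the upper bound $\Lambda_F\leq n+2$ by contradiction: if $|\bu\wedge\bv|\geq n+2$ then one of $U_1,V_1$ is long, which forces $x_1\cdots x_{n+1}=35^n$ and $y_1\cdots y_{n+1}=42^n$ and hence $\Lambda_E\geq n+1$. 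You instead compute $|\bu\wedge\bv|$ exactly by enumerating the segment types of $X_1$ and $Y_1$, concluding $|\bu\wedge\bv|\in\{n-1,n\}$ and hence, by Lemma \ref{case-id} (which applies since $X_1\neq Y_1$ forces $U_1\neq V_1$), $\Lambda_F\in\{n-1,n,n+1\}$ — a sharper conclusion ($|\Lambda_E-\Lambda_F|\leq 1$ rather than $\leq 2$) at the cost of a longer case check. I verified your key assertions on the cases you list (in particular that $x_{n+1}=5$ forces the run of $2$'s in $\by$ to stop at length $n-1$, and that when the first differing pair is $\binom{2}{1}$ or $\binom{2}{3}$ the next letter of $\bu$ lies in $\{2,5\}$ so the $F$-automaton exits one step later), and they hold; the one caveat is that the enumeration "one reads off that $|\bu\wedge\bv|\in\{n-1,n\}$" is the entire content of the argument and is asserted rather than carried out, so in a final write-up you would need to display those half-dozen subcases explicitly, including the $n=1$ degeneracies where $u_1\neq v_1$ and the pair $(3,4)$ or $(4,3)$ exits immediately.
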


\begin{proof} We denote $n=\Lambda_E(\bx,\by)$.
By symmetry, we may assume that $h=e2$.
On the $E$-automaton, the itinerary of $(\bx,\by)$ must be
  $(id)^{0}\to (e2)^{n}\to Exit.$  So
\begin{equation*}
\displaystyle\binom{x_1\cdots x_n}{y_1\cdots y_n}=\binom{35^{n-1}}{42^{n-1}}
~~\text{and}~~
\displaystyle\binom{x_{n+1}}{y_{n+1}}\neq\binom{5}{2}.
\end{equation*}

If $n\geq 2$, since $u_1\dots u_k$ is determined by $x_1\dots x_{k+1}$ (Lemma \ref{lem-length}), we have
\begin{equation*}
\binom{u_1\cdots u_{n-1}}{v_1\cdots v_{n-1}}=\binom{42^{n-2}}{42^{n-2}},
\end{equation*}
which implies $\Lambda_F(\bu,\bv)\geq n-1$. If $n=1$, this is also true.

Suppose $\Lambda_F(\bu,\bv)> n+2$, then $|\bu\wedge\bv|\geq n+2$ by Lemma \ref{case-id}.
Now at least one of $U_1$ and $V_1$ has length larger than $n+2$ (for otherwise $U_1=V_1$).

If $|U_1|>n+2$, then $U_1=42^k5(k>n)$, so $X_1=35^{k+1}$.
On the other hand, since $v_1\dots v_{n+2}=42^{n+1}$,
we have  $y_1\dots y_{n+1}=42^n$ no matter $Y_1=4$ or $Y_1=42^\ell 5$.
Therefore, $\Lambda_E(\bx,\by)=n+1$,  a contradiction.

If $|V_1|>n+2$, then $V_1=42^{k-1}55(k>n)$, so $Y_1=42^{k}5$. Again,
we have $u_1\dots u_{n+2}=42^{n+1}$,
hence
$x_1\dots x_{n+1}=35^n$ and  $\Lambda_E(\bx,\by)=n+1$.

 Therefore, we have  $\Lambda_F(\bu,\bv)\leq n+2$, and the lemma is proved.
\end{proof}

\medskip

%\noindent \textbf{Proof of Theorem \ref{thm-s-number}.}  Let $\sigma:\{1,2,\dots, 5\}^{\infty}\to \{1,2,\dots,5\}^{\infty}$ be the left shift operator, that is,
%$\displaystyle
%%\begin{equation*}
%  \sigma(x_n)_{n=1}^\infty=(x_n)_{n=2}^\infty.
%%\end{equation*}
%$
%
%Let $k$ be the smallest integer such that $X_k\neq Y_k$.
%Denote $|X_1\dots X_{k-1}|=N$, and set
%$\bx'=\sigma^N(\bx)$, $\by'=\sigma^N(\by)$. Then clearly
%$$\Lambda_{E}(\textbf{x},\textbf{y})=\Lambda_{E}(\textbf{x}',\textbf{y}')+N.$$
%Denote $\bu'=g(\bx')$ and $\bv'=g(\by')$, then also we have
%$$
%\Lambda_F(\bu,\bv)=\Lambda_F(\bu',\bv')+N.
%$$
%By Proposition \ref{thm-key}, Lemma \ref{lem-key}  and Proposition \ref{thm-key-2}, Theorem \ref{thm-s-number} holds for $\bx'$ and $\by'$, hence
%it also holds for $\bx$ and $\by$.
%$\Box$
%
%\medskip

\noindent \textbf{Proof of Theorem \ref{thm-dense-subset}.}
Pick $x,y\in E^{'}$. Let $\bx, \by$ be the codings of $x,y\in E'$ respectively, and let
$\bu, \bv$ be the codings of $u=f(x), v=f(y)\in F'$ respectively.
By Lemma \ref{lemma 3.2} and Lemma \ref{lemma 3.3},
\begin{equation*}
 \frac{1}{c}3^{-\Lambda_{E}(\bx,\by)}\leq|x-y|\leq c 3^{-\Lambda_{E}(\bx,\by)},
\end{equation*}
and
\begin{equation*}
\frac{1}{c}3^{-\Lambda_{F}(\bu,\bv)}\leq|u-v|\leq c 3^{-\Lambda_{F}(\bu,\bv)},
\end{equation*}
where  $c=6\sqrt{2}$.
Using Theorem \ref{thm-s-number}, we obtain
\begin{equation*}
  \frac{1}{27c^2}|x-y|\leq|u-v|\leq 27c^2|x-y|.
\end{equation*}
Hence $f: E^{'}\rightarrow F^{'}$ is a bi-Lipschitz map.  $\Box$
 \medskip
 
 Extending the map $f:~E'\to F'$ to $E$ by continuity, we still denote it by $f$.

\noindent \textbf{Proof of Theorem \ref{thm-main}.} 
We have seen that $f:~E\to F$ is bi-Lipschitz.  Now show that $f:~E\to F$ is measure-preserving.
To this end, we need only show that 
\begin{equation}\label{mea}
\frac{{\mathcal H}^s(E_I)}{{\mathcal H}^s(f(E_I))}=\frac{{\mathcal H}^s(E)}{{\mathcal H}^s(F)},  \text{ for any word } I\in \{1,2,3,4,5\}^*
\end{equation}
 where $s=\dim_H E=\log 5/\log 3$. Let
 $$
 {\mathcal J}=\{J'41;~ J'\in \{1,2,3,,4,5\}^* \text{ and `$41$' does not appear in }J'\}.
 $$
Then for any $J\in {\mathcal J}$, the cylinder $E_J$ maps onto a cylinder of $F$
of the same order, precisely, $f(E_J)=F_{g(J)}$ where $g$ is the map defined by the transducer in Section 2 (see Remark \ref{g-finite});
and hence \eqref{mea} holds.

Notice that $E_J$, $J\in {\mathcal J}$ are disjoint in Hausdorff measure, and
$$
{\mathcal H}^s(E)=\sum_{J\in {\mathcal J}} {\mathcal H}^s(E_J).
$$
It follows that for any $I\in \{1,2,3,4,5\}^*$,
\begin{equation}\label{eq-disjoint}
{\mathcal H}^s(E_I)=\sum_{J\in {\mathcal J}} {\mathcal H}^s(E_{IJ}).
\end{equation}
Moveover,
\begin{equation}\label{eq-g}
{\mathcal H}^s(f(E_{IJ}))={\mathcal H}^s(F_{g(IJ)})=c {\mathcal H}^s(E_{IJ}),
\end{equation}
where $c={\mathcal H}^s(F)/{\mathcal H}^s(E)$. Therefore, 
$$
\begin{array}{rl}
{\mathcal H}^s(f(E_I))& =\sum_{J\in {\mathcal J}} {\mathcal H}^s(f(E_{IJ})) \quad (\text{ by } \eqref{eq-disjoint})\\
&=c\sum_{J\in {\mathcal J}} {\mathcal H}^s(E_{IJ}) \quad (\text{ by } \eqref{eq-g})\\
&=c{\mathcal H}^s(E_I), \quad (\text{ by } \eqref{eq-disjoint})
\end{array}
$$
which proves \eqref{mea}, and finishes the proof of the theorem.
$\Box$

\end{document}